 \def\LaTeX{\leavevmode L\raise.42ex
   \hbox{\kern-.3em\size{\sf@size}{0pt}\selectfont A}\kern-.15em\TeX}
\newcommand{\BibTeX}{{\rm B\kern-.05em{\sc
i\kern-.025emb}\kern-.08em\TeX}}
\newtheorem{thm}{Theorem}[section]
\newtheorem{lem}[thm]{Lemma}
\newtheorem{defn}[thm]{Definition}
\numberwithin{equation}{section}
\begin{document}

\title[New estimates of $n$-widths on Compact Manifolds]
{Estimates of Kolmogorov, Gelfand and linear $n$- widths on Compact Riemannian Manifolds}

\maketitle
\begin{center}

\author{Isaac Z. Pesenson }\footnote{ Department of Mathematics, Temple University,
 Philadelphia,
PA 19122; pesenson@temple.edu }

\end{center}

\begin{abstract}
We determine lower and exact  estimates of  Kolmogorov, Gelfand and linear  $n$-widths of unit balls in Sobolev  norms in $L_{p}$-spaces on 
compact Riemannian 
manifolds.   As it was shown  by us previously these lower estimates are exact asymptotically in the case of compact homogeneous manifolds.  The proofs   rely  on two-sides  estimates for the near-diagonal localization  of   kernels of functions of elliptic operators.
\end{abstract}

 {\bf Keywords and phrases:}{ Compact manifold,  Laplace-Beltrami operator, Sobolev space, eigenfunctions,  kernels, $n$-widths.}

 {\bf Subject classifications}[2000]{ 43A85; 42C40; 41A17;
Secondary 41A10}


\section{Introduction and the main results}

\bigskip

The goal of the paper is to determine lower and exact  estimates of  Kolmogorov, Gelfand and linear $n$-widths of unit balls in
Sobolev  norms in $L_{p}({\bf M})$-spaces on a compact  connected Riemannian 
manifold ${\bf M}$.   

Let us recall \cite{LGM}, \cite{pin} that for a given subset $H$ of a normed linear space $Y$, the Kolmogorov $n$-width
$d_{n}(H,Y)$ is defined as
$$
d_{n}(H,Y)=\inf_{Z_{n}}\sup_{x\in H}\inf_{z\in Z_{n}}\|x-z\|_{Y}
$$
where $Z_{n}$ runs over all $n$-dimensional subspaces of $Y$. The linear $n$-width $\delta_{n}(H,Y)$  is defined as 
$$
\delta_{n}(H,Y)=\inf _{A_{n}}\sup_{x\in H}\|x-A_{n}x\|_{Y}
$$
where $A_{n}$ runs over all bounded operators $A_{n}: Y\rightarrow Y$ whose range has dimension $n$.
The Gelfand  $n$-width of a subset $H$ in a linear space $Y$ is defined by
$$
d^{n}(H,Y)=\inf_{Z^{n}}\sup\{\|x\|: \>\>x\in H\cap Z^{n}\},
$$
where the infimum is taken over all subspaces $Z^{n}\subset Y$ of codimension $\leq n$.
The width $d_{n}$ characterizes the best approximative possibilities by approximations by $n$-dimensional subspaces, the width $\delta_{n}$ characterizes the best approximative possibilities  of any $n$-dimensional linear method. The width $d^{n}$ plays a key role in questions about interpolation and reconstruction of functions.

In our paper the notation $S_{n}$ will stay for either Kolmogorov $n$-width $d_{n}$ or linear $n$-width $\delta_{n}$;
 the notation $s_{n}$ will be used for either $d_{n}$ or Gelfand $n$-width $d^{n};
\>\> S^{n}$ will be used for either $d_{n}, \>\> d^{n},$ or $\delta_{n}$.

If $\gamma \in \bf R$, we write $S^n(H, Y)  \ll n^{\gamma},\>\>n\in \mathbb{N},$ to mean that one has the upper estimate $S^n(H, Y) \geq Cn^{\gamma}$ 
 where  $C$ is independent  of $n$.  We say that  one has 
 the lower estimate if $S^n(H, Y)\gg cn^{\gamma}$ for $n > 0$  where $c>0$ is independent of $n$. In the case we have both estimates  we write $S^n(H, Y) \asymp cn^{\gamma}$ and call it the exact estimate.
More general, for two functions $f(t),\>g(t)$ notation $f(t)\asymp g(t)$ means existence of two unessential positive constants $c, C$ for which $cg(t)\leq f(t)\leq Cg(t)$ for all admissible $t$.

Let $({\bf M},g)$ be a smooth, connected, compact Riemannian manifold without boundary
with  Riemannian measure $dx$.  
Let  $L_{q}({\bf M})=L_{q}({\bf M}),\> 1\leq q\leq\infty,$ be the regular Lebesgue  space constructed with the Riemannian density.
Let $L$ be an elliptic smooth second-order 
differential operator $L$ which is self-adjoint and positive definite in $L_{2}({\bf M})$.   For such an operator all the powers $L^{r}, \>\>r>0,$ 
are well defined on $C^{\infty}({\bf M})\subset L_{2}({\bf M})$ and continuously map $C^{\infty}({\bf M})$ into itself. 
Using duality every operator  $L^{r}, \>\>r>0,$ can be extended  to distributions on ${\bf M}$.
The Sobolev space $W_{p}^{r}=W_{p}^{r}({\bf M}),\> 1\leq p\leq\infty, \>\> r>0,$ is defined as the space of all 
$f\in L_{p}({\bf M}), 1\leq p\leq \infty$, for which the following graph norm is finite
\begin{equation}
\label{Sob}
\|f\|_{W^{r}_{p}({\bf M})}=\|f\|_{p}+\|L^{r/2} f\|_{p}.
\end{equation}

Our objective is to obtain asymptotic estimates of $S_{n}\left(H, L_{q}({\bf M})\right)$,  where  $H$   is the unit ball $B^r_p({\bf M})$ in the Sobolev space 
$W_{p}^{r}=W_{p}^{r}({\bf M}), 1\leq p\leq\infty,\>\>r>0,$   Thus,
$$
B^r_p=B^r_p({\bf M})=\left\{f\in W_{p}^{r}({\bf M})\>: \> \|f\|_{W_{p}^{r}({\bf M})}\leq 1\right\}.
$$
It is  important to remember that in all our considerations the inequality
 \begin{equation}\label{basicineq}
\frac{r}{s}>\left(\frac{1}{p}-\frac{1}{q}\right)_+
\end{equation}
with $s=dim \>{\bf M}$ will be assumed. Thus, by the Sobolev embedding theorem the set  $B^{r}_{p}({\bf M})$ is a  subset of $L_{q}({\bf M})$. Moreover, since ${\bf M}$ is compact,  the Rellich-Kondrashov theorem implies that the embedding of $B^{r}_{p}({\bf M})$ into $L_{q}({\bf M})$  will be compact.

  Our main result is the following  theorem which is proved in     section \ref{Proof}.

\begin{thm}\label{Main}
For any compact Riemannian manifold ${\bf M}$ of dimension $s$, any elliptic second-order smooth operator $L$,  any $1\leq p,\ q\leq\infty,$   if  $S_{n}$ is either of $d_{n}$ or $\delta_{n}$ and $S^{n}$ is either of $d_{n}$, $\delta_{n}$ or $d^{n}$ then the following holds for any $r$ that satisfies (\ref{basicineq}):

\begin{enumerate}

\item  if $1\leq q\leq p\leq \infty$ then

\begin{equation}
S^n\left(B^r_p({\bf M}),L_q({\bf M})\right) \gg n^{-\frac{r}{s}},
\end{equation}

\item if $1\leq p\leq q\leq 2$ then

\begin{equation}
S_{n}(B^r_p({\bf M}),L_q({\bf M}))  \gg n^{-\frac{r}{s}+\frac{1}{p}-\frac{1}{q}},
\end{equation}

and

\begin{equation}
d^{n}\left(B^r_p({\bf M}),L_q({\bf M})\right)  \gg n^{-\frac{r}{s}},
\end{equation}

\item if $2\leq p\leq q\leq \infty$ then

\begin{equation}
d_{n}\left(B^r_p({\bf M}),L_q({\bf M})\right)  \gg n^{-\frac{r}{s}}, 
\end{equation}

\begin{equation}
d^{n}(B^r_p({\bf M}),L_q({\bf M}))  \gg n^{-\frac{r}{s}+\frac{1}{p}-\frac{1}{q}},
\end{equation}

\begin{equation}
\delta_{n}(B^r_p({\bf M}),L_q({\bf M}))  \gg n^{-\frac{r}{s}+\frac{1}{p}-\frac{1}{q}},
\end{equation}

\item if $1\leq p\leq 2\leq q\leq \infty$ then

\begin{equation}
d_{n}(B^r_p({\bf M}),L_q({\bf M}))  \gg n^{-\frac{r}{s}+\frac{1}{p}-\frac{1}{2}},
\end{equation}

\begin{equation}
d^{n}(B^r_p({\bf M}),L_q({\bf M}))  \gg n^{-\frac{r}{s}+\frac{1}{2}-\frac{1}{q}},
\end{equation}

\begin{equation}
\delta_{n}(B^r_p({\bf M}),L_q({\bf M}))  \gg max \left(   n^{-\frac{r}{s}+\frac{1}{2}-\frac{1}{q}}, n^{-\frac{r}{s}+\frac{1}{q}-\frac{1}{2}}\right ).
\end{equation}

\end{enumerate}

\end{thm}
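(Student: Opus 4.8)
The plan is to reduce the manifold estimates to the classical lower bounds for $n$-widths of Sobolev-type balls on a Euclidean cube (or torus), which are known from the work of Kashin, Gluskin, and others, and are compiled in the monographs \cite{LGM}, \cite{pin}. The bridge between the two settings is the construction of a family of ``bump'' functions concentrated near a point of ${\bf M}$, built from the near-diagonal localization of kernels of functions of $L$ promised in the abstract. Concretely, fix a coordinate chart diffeomorphic to a Euclidean ball, transplant a rescaled Euclidean test function into that chart, and smooth it by applying an operator of the form $F(t^2 L)$ with $F$ a fixed Schwartz cutoff; the near-diagonal kernel estimates control both the $L_q$-mass of the resulting function and its $W^r_p$-norm, so that after scaling $t\asymp n^{-1/s}$ one packs $\asymp n$ disjointly supported (or nearly orthogonal, in the $L_2$-based cases) copies into a ball of radius $\asymp 1$ in ${\bf M}$.

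The key steps, in order, are: (i) record the two-sided near-diagonal kernel estimates for $F(t^2 L)$ and deduce the ``single-bump'' inequalities $\|\varphi_{t}\|_q \asymp t^{s/q}$ and $\|\varphi_t\|_{W^r_p} \asymp t^{-r + s/p}$ for the model bump at scale $t$; (ii) tile a fixed-size ball on ${\bf M}$ by $N \asymp t^{-s}$ essentially disjoint scale-$t$ bumps $\varphi_{t,j}$, so that linear combinations $\sum_j a_j \varphi_{t,j}$ have $L_q$-norm $\asymp t^{s/q}\|a\|_{\ell_q^N}$ (for $q<\infty$; the $q=\infty$ endpoint handled separately) and $W^r_p$-norm $\asymp t^{-r+s/p}\|a\|_{\ell_p^N}$; (iii) this identifies the relevant finite-dimensional subspace of $W^r_p({\bf M})$ with a scaled copy of $\ell_p^N \hookrightarrow \ell_q^N$, so a lower bound for $S^n$ of the Sobolev ball on ${\bf M}$ follows from the corresponding lower bound for $n$-widths of the operator $\ell_p^N \to \ell_q^N$ with the appropriate normalization, with $N \asymp n$, $t\asymp n^{-1/s}$; (iv) insert the classical finite-dimensional width estimates of Kashin–Gluskin–Pietsch–Stesin for $S^n(B_p^N, \ell_q^N)$ into each of the cases $1\le q\le p$, $1\le p\le q\le 2$, $2\le p\le q\le\infty$, and $1\le p\le 2\le q\le\infty$, and check that the exponents of $N\asymp n$ produced are exactly those asserted in items (1)–(4), including the $\max$ in the last $\delta_n$ bound (which comes from combining the two competing finite-dimensional estimates for $\delta_n(B_p^N,\ell_q^N)$). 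For the Gelfand-width lines that carry the exponent $-r/s$ rather than a shifted exponent, one instead uses the trivial but sharp observation that $d^n$ cannot drop below the norm of any single normalized bump surviving in a codimension-$n$ subspace, i.e. a dimension count against the $(n+1)$-dimensional span of bumps.

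For the monotonicity/inclusion reductions — e.g. passing from $S_n$ statements to $d_n$ or $\delta_n$ statements, or from the case $q\le p$ to general $q$ — I would invoke the standard relations $d_n \le \delta_n$, $d^n \le \delta_n$, the behavior of widths under norm-decreasing embeddings $L_q \hookrightarrow L_{q'}$ on the compact ${\bf M}$, and the duality $d^n(B_p^N,\ell_q^N) = d_n(B_{q'}^N,\ell_{p'}^N)$, so that many of the eleven displayed inequalities are not proved independently but derived from a smaller core set.

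The main obstacle, and the place where the manifold geometry genuinely enters (everything else being a transcription of Euclidean facts), is step (ii): showing that the transplanted, smoothed bumps are \emph{simultaneously} well-localized in space and correctly normalized in \emph{both} the $L_q$ and the $W^r_p$ norms, uniformly as $t\to 0$, and that their interactions (cross terms $\|\varphi_{t,i}\|$ against $\varphi_{t,j}$, and the error between the true Sobolev norm and the $\ell_p^N$-model norm on their span) are negligible. This is exactly what the ``two-sides estimates for the near-diagonal localization of kernels of functions of elliptic operators'' are designed to supply: the upper kernel bound gives rapid off-diagonal decay hence near-disjointness, while the lower (on-diagonal) bound prevents the bumps from degenerating and gives the matching lower estimate on $\|\varphi_t\|_q$ and the control of $\|L^{r/2}\varphi_t\|_p$. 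Once that lemma is in hand the rest is bookkeeping over the four regimes of $(p,q)$.
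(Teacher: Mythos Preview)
Your proposal is correct and follows essentially the same strategy as the paper: build localized bump functions from the kernels $K_t^F(x_i,\cdot)$, pack $P_N\asymp n$ of them disjointly to reduce (via a discretization lemma) to the finite-dimensional widths $s_n(b_p^{P_N},\ell_q^{P_N})$, then insert the classical estimates from \cite{LGM} case by case and use $\delta_n\ge\max(d_n,d^n)$ together with monotonicity in $p,q$. The one refinement you did not anticipate is that the paper takes the bumps to be kernel slices $\varphi_i^N=N^{-1}K_t^F(x_i^N,\cdot)$ with $F$ chosen so that $\xi\mapsto F(\xi^2)$ has compactly supported Fourier transform, and then invokes finite propagation speed for the wave equation to make the $\varphi_i^N$ \emph{exactly} compactly supported in disjoint balls---this collapses your ``essentially disjoint / cross-term control'' step to a triviality and also makes the Sobolev estimate immediate, since $L^{r/2}\varphi_i^N$ is again a kernel slice for the symbol $\lambda^{r/2}F(\lambda)$.
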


As it was shown in \cite{gp3} all the estimates of this theorem are exact if ${\bf M}$ is a compact homogeneous manifold. Let's remind that every compact homogeneous manifold is of the form $G/H$ where $G$ is a compact Lie group and $H$ is its closed subgroup. 
For  compact  homogeneous manifolds we obtained in \cite{gp3}  exact asymptotic estimates for $d_{n}$ and $\delta_{n}$ for all $1\leq p,q\leq \infty$ and some restrictions on $r$.

To compare our lower estimates with known upper estimates, let us remind an inequality which was proved in our previous papers \cite{gp2}, \cite{gp3}. 
\begin{thm}
\label{basic}

For any compact Riemannian manifold, any $L$, any $1\leq p,q\leq\infty,$ and any  $\> r$  which satisfies (\ref{basicineq})
 if  $S_{n}$ is either of $d_{n}$ or $\delta_{n}$ then the following holds 
\begin{equation}
\label{basicway}
S_n(B^r_p({\bf M}),L_q({\bf M})) \ll n^{-\frac{r}{s}+\left(\frac{1}{p}-\frac{1}{q}\right)_+}.
\end{equation}

\end{thm}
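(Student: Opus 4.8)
The plan is to bound the linear width $\delta_n$ from above by an explicit finite-rank operator; since $d_n(H,Y)\le\delta_n(H,Y)$ for every $H,Y$ and every $n$, this handles both admissible choices of $S_n$ at once. I would fix a cutoff $\varphi\in C^\infty[0,\infty)$ with $\varphi\equiv1$ on $[0,1]$ and $\operatorname{supp}\varphi\subset[0,2]$, put $P_\lambda=\varphi(L/\lambda)$, and take $A_n:=P_{\lambda(n)}$ with $\lambda(n)\asymp n^{2/s}$. Since $\operatorname{supp}\varphi\subset[0,2]$, the range of $P_\lambda$ lies in the span of the eigenfunctions of $L$ with eigenvalue $\le2\lambda$, which by Weyl's asymptotic law has dimension $\asymp\lambda^{s/2}$; fixing the constant in $\lambda(n)=c\,n^{2/s}$ small enough gives $\operatorname{rank}A_n\le n$ for all $n$ (the finitely many small $n$ being trivial, as $B^r_p({\bf M})$ is a bounded subset of $L_q({\bf M})$). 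Moreover $A_n$ is bounded on every $L_q({\bf M})$ by the Schur test applied to the near-diagonal bound on the kernel of $\varphi(L/\lambda)$, so it is a legitimate competitor for $\delta_n$, and everything reduces to estimating $\sup_{f\in B^r_p({\bf M})}\|f-A_nf\|_q$.

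Next I would complete $\varphi(\cdot/\lambda)$ to a dyadic resolution of unity $1=\varphi(t/\lambda)+\sum_{j\ge0}\theta_j(t)$ on $[0,\infty)$ with $\theta_j(t)=\theta_0\bigl(t/(2^j\lambda)\bigr)$, where $\theta_0\in C^\infty$ and $\operatorname{supp}\theta_0\subset[1,4]$, so that $\operatorname{supp}\theta_j\subset[2^j\lambda,2^{j+2}\lambda]$ and $I-A_n=\sum_{j\ge0}\theta_j(L)$. From the near-diagonal kernel bounds for functions of $L$, rescaled to the band $2^j\lambda$, two facts should follow via the Schur test and Young's inequality applied to the integrated kernel estimates: the Bernstein--Nikolskii inequality
\begin{equation}\label{bern}
\|\theta_j(L)g\|_{L_q({\bf M})}\ll\bigl(2^j\lambda\bigr)^{\frac{s}{2}\left(\frac1p-\frac1q\right)}\,\|\theta_j(L)g\|_{L_p({\bf M})},\qquad 1\le p\le q\le\infty,
\end{equation}
and, since $\theta_j(t)\,t^{-r/2}=(2^j\lambda)^{-r/2}\,\psi_0\bigl(t/(2^j\lambda)\bigr)$ for the fixed smooth bump $\psi_0(u)=\theta_0(u)u^{-r/2}$ with $\operatorname{supp}\psi_0\subset[1,4]$, the smoothing estimate
\begin{equation}\label{jack}
\|\theta_j(L)f\|_{L_p({\bf M})}=\bigl\|\bigl(\theta_j(L)L^{-r/2}\bigr)L^{r/2}f\bigr\|_{L_p({\bf M})}\ll\bigl(2^j\lambda\bigr)^{-r/2}\,\|L^{r/2}f\|_{L_p({\bf M})}.
\end{equation}

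Finally, for $f\in B^r_p({\bf M})$ one has $\|L^{r/2}f\|_p\le\|f\|_{W^r_p({\bf M})}\le1$, and I would bound $\|f-A_nf\|_q=\bigl\|\sum_{j\ge0}\theta_j(L)f\bigr\|_q$ termwise by the triangle inequality, so that no Littlewood--Paley square-function estimate enters. If $1\le q\le p\le\infty$, compactness of ${\bf M}$ gives $\|h\|_q\ll\|h\|_p$, and (\ref{jack}) yields $\|f-A_nf\|_q\ll\sum_{j\ge0}(2^j\lambda)^{-r/2}\ll\lambda^{-r/2}\asymp n^{-r/s}$, which is (\ref{basicway}) with $\bigl(\tfrac1p-\tfrac1q\bigr)_+=0$. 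If $1\le p\le q\le\infty$, combining (\ref{bern}) and (\ref{jack}) gives
\begin{equation}\label{dyad}
\|f-A_nf\|_q\ll\sum_{j\ge0}\bigl(2^j\lambda\bigr)^{\frac{s}{2}\left(\frac1p-\frac1q\right)-\frac r2}=\lambda^{\frac{s}{2}\left(\frac1p-\frac1q\right)-\frac r2}\sum_{j\ge0}2^{\,j\left(\frac{s}{2}\left(\frac1p-\frac1q\right)-\frac r2\right)},
\end{equation}
and the standing hypothesis (\ref{basicineq}) is precisely what makes the exponent $\tfrac{s}{2}\bigl(\tfrac1p-\tfrac1q\bigr)-\tfrac r2$ strictly negative, so the geometric series converges and, with $\lambda\asymp n^{2/s}$, the right-hand side is $\ll n^{-r/s+1/p-1/q}$; together with the previous case this is (\ref{basicway}). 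I expect the main obstacle to lie entirely upstream of this bookkeeping: establishing the near-diagonal localization of the kernels of $\theta_j(L)$, $\theta_j(L)L^{-r/2}$ and $\varphi(L/\lambda)$ on a general compact Riemannian manifold, from which (\ref{bern}), the uniform $L_p$-boundedness of $A_n$, and (\ref{jack}) all follow; once that package is in hand the proof is just the scale bookkeeping and the dyadic summation in (\ref{dyad}), whose convergence is the one point at which (\ref{basicineq}) is used.
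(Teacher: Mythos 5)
Your proof is correct: $A_n=\varphi(L/\lambda)$ with $\lambda\asymp n^{2/s}$ has rank $\ll\lambda^{s/2}\le n$ by Weyl's formula (\ref{Weyl}) and is uniformly bounded on $L_q({\bf M})$ by the kernel localization of Lemmas \ref{kersize} and \ref{Lalphok}; the dyadic pieces satisfy your Bernstein--Nikolskii and smoothing bounds (cleanest in one stroke: $\theta_j(L)f=(2^j\lambda)^{-r/2}\psi_0\bigl(L/(2^j\lambda)\bigr)L^{r/2}f$, with the $L_p\to L_q$ norm of $\psi_0\bigl(L/(2^j\lambda)\bigr)$ coming from Lemma \ref{Lalphok} and generalized Young, which also sidesteps having $\|\theta_j(L)g\|_p$ on the right of your Nikolskii step); and (\ref{basicineq}) is precisely what makes the geometric series converge, giving the bound for $\delta_n$ and hence for $d_n\le\delta_n$. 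Note that the present paper does not prove Theorem \ref{basic} at all but quotes it from \cite{gp2}, \cite{gp3}, and your argument is essentially the one used there --- approximation by band-limited functions in $\mathbf{E}_{t}(L)$ constructed from functions of $L$ with localized kernels, combined with Weyl counting --- so this is the same approach, differing only in packaging (a single smoothed spectral cutoff versus best approximation by $\mathbf{E}_{t}(L)$ via Jackson-type estimates).
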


By comparing these two theorems we obtain the following exact estimates.

\begin{thm}
For any compact Riemannian manifold ${\bf M}$ of dimension $s$, any elliptic second-order smooth operator $L$,  any $1\leq p,\ q\leq\infty,$   if  $S_{n}$ is either of $d_{n}$ or $\delta_{n}$  then the following holds for any $r$ which satisfies (\ref{basicineq})
\begin{enumerate}

\item if  $1\leq q\leq p\leq \infty$, then

$$
S_{n}\left(B_{p}^{r}({\bf M}), L_{q}({\bf M})\right)\asymp n^{-\frac{r}{s}},
$$

\item if $1\leq p\leq q\leq 2$, then

$$
S_{n}\left(B_{p}^{r}({\bf M}), L_{q}({\bf M})\right)\asymp n^{-\frac{r}{s}+\frac{1}{p}-\frac{1}{q}},
$$

\item if $2\leq p\leq q\leq \infty$, then 

$$
\delta_{n}\left(B_{p}^{r}({\bf M}), L_{q}({\bf M})\right)\asymp n^{-\frac{r}{s}+\frac{1}{p}-\frac{1}{q}}.
$$

\end{enumerate}

\end{thm}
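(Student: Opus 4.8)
The plan is to derive all three exact estimates by simply combining the lower bounds of Theorem \ref{Main} with the upper bound of Theorem \ref{basic}, after checking that in each of the three ranges of $p,q$ the exponent $-\frac{r}{s}+\left(\frac{1}{p}-\frac{1}{q}\right)_+$ appearing in (\ref{basicway}) collapses to the exponent in the corresponding lower estimate. So the whole argument reduces to three case checks, each of which amounts to matching exponents.

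First I would dispose of the range $1\leq q\leq p\leq\infty$. Here $\frac1p\leq\frac1q$, so $\left(\frac1p-\frac1q\right)_+=0$ and Theorem \ref{basic} gives $S_n\left(B^r_p({\bf M}),L_q({\bf M})\right)\ll n^{-r/s}$; part (1) of Theorem \ref{Main}, applied with $S^n$ taken to be $d_n$ or $\delta_n$, supplies the matching lower bound $S_n\gg n^{-r/s}$, and together these yield $S_n\asymp n^{-r/s}$. Next, for $1\leq p\leq q\leq 2$ one has $\left(\frac1p-\frac1q\right)_+=\frac1p-\frac1q$, so Theorem \ref{basic} gives $S_n\ll n^{-r/s+1/p-1/q}$, while the first displayed inequality in part (2) of Theorem \ref{Main} gives $S_n\gg n^{-r/s+1/p-1/q}$, hence $S_n\asymp n^{-r/s+1/p-1/q}$. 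Finally, for $2\leq p\leq q\leq\infty$ the truncation again drops out and Theorem \ref{basic} gives $\delta_n\ll n^{-r/s+1/p-1/q}$, which is matched by the last displayed inequality in part (3) of Theorem \ref{Main}, giving $\delta_n\asymp n^{-r/s+1/p-1/q}$.

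I do not expect a genuine obstacle: the argument is entirely a matter of lining up exponents between the two previously established theorems, the only step requiring a moment's attention being the verification that the positive-part truncation in (\ref{basicway}) produces exactly the exponent of the relevant lower estimate in each regime. The one point worth remarking is that in the third range the method does not yield an exact estimate for $d_n$, since part (3) of Theorem \ref{Main} only provides $d_n\gg n^{-r/s}$, which agrees with the upper bound $n^{-r/s+1/p-1/q}$ only when $p=q$; this is precisely why the statement is confined to $\delta_n$ there, and likewise why no clean single formula is asserted in the mixed range $1\leq p\leq 2\leq q\leq\infty$, where the lower bounds of part (4) and the upper bound of Theorem \ref{basic} fail to meet.
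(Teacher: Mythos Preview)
Your proposal is correct and is exactly the argument the paper intends: the theorem is stated immediately after Theorems \ref{Main} and \ref{basic} with the remark ``By comparing these two theorems we obtain the following exact estimates,'' and no further proof is given. Your case-by-case matching of the exponent $-\frac{r}{s}+\left(\frac{1}{p}-\frac{1}{q}\right)_+$ from (\ref{basicway}) against the relevant lower bound in each range is precisely that comparison spelled out.
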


Our results could be carried over to Besov spaces on manifolds using general results about interpolation of compact operators \cite{T}. 

Our results generalize some of the known estimates for
the particular case in which $\bf M$ is a compact symmetric space of rank one which were obtained in 
papers \cite{BKLT} and \cite{brdai}.
They, in turn generalized and extended results from \cite{BirSol}, \cite{Ho}, \cite{Kas}, \cite{Ma}, \cite{Ka1}, \cite{Ka2}, \cite{Kas}.

 {\bf Acknowledgment} 

In July of 2015 at  a BIRS-CMO meeting in Oaxaca Prof. B. Kashin asked me  if  our results with D.Geller \cite{gp3} about $n$-widths on homogeneous compact manifolds can be extended to general compact Riemannian manifolds. This paper gives a partial answer to his question. I am grateful to Prof. B. Kashin for stimulating my interest to this problem.

\section{Kernels on compact Riemannian manifolds}

We consider $({\bf M},g)$ be a smooth, connected, compact Riemannian manifold without boundary
with  Riemannian measure $dx$. 
Let $L$ be the Laplace-Beltrami operator of the metric $g$ which is well defined on $C^{\infty}({\bf M})$. 
We will use the same notation
$L$ for the closure of $L$ from $C^{\infty}({\bf M})$ in $L_{2}({\bf M})$.
This closure is a
self-adjoint non-negative operator on the space $L_{2}({\bf M})$.
The spectrum of this operator, say
$0=\lambda_{0}<\lambda_{1}\leq \lambda_{2}\leq ...$,
is discrete and approaches infinity.  Let
$u_{0}, u_{1}, u_{2}, ...$ be a corresponding
complete system of real-valued orthonormal eigenfunctions, and let
$\textbf{E}_{t}(L),\  t>0,$ be the span of all
eigenfunctions of $L$, whose corresponding eigenvalues
are not greater than $t$.    Since the
operator $L$ is of order two, the dimension
$\mathcal{N}_{t}$ of the space ${\mathbf E}_{t}(L)$ is
given asymptotically by Weyl's formula, \cite{Hor}, 
which says, in sharp form that for some $c > 0$,
\begin{equation}
\label{Weyl}
\mathcal{N}_{t}(L) = \frac{vol({\bf M})\sigma_{s}}{(2\pi)^{s}}t^{s/2} + O(t^{(s-1)/2}),\>\>\>\sigma_{s}=\frac{2\pi^{s/2}}{s\Gamma(s/2)},\>\>\>s=dim\ {\bf M}.
\vspace{.3cm}
\end{equation}
where $s=dim \ {\bf M}$.
Because  $\mathcal{N}_{\lambda_l} = l+1$, we conclude that, for some constants $c_1, c_2 > 0$,
\begin{equation}
\label{lamest}
c_1 l^{2/s} \leq \lambda_l \leq c_2 l^{2/s}  
\end{equation}
for all $l$.
Since $L^m u_l = \lambda_l^m u_l$, and $L^m$ is an elliptic differential
operator of degree $2m$, Sobolev's lemma, combined with the last fact, implies that
for any integer $k \geq 0$, there exist $C_k, \nu_k > 0$ such that
\begin{equation}
\label{ulest}
\|u_l\|_{C^k({\bf M})} \leq C_k (l+1)^{\nu_k}. 
\end{equation}
For a $t>0$ let's consider the function
\begin{equation}
K_{t}(x,y)=\sum_{\lambda_{l}\leq t}u_{l}(x)u_{l}(y)
\end{equation}
which is known as the spectral function associated to $L$. In \cite{Hor} one can find the following estimate
\begin{equation}\label{Hor}
K_{t}(x,x)=\frac{\sigma_{s}}{(2\pi)^{s}}t^{s/2}+O(t^{s-1}) 
\end{equation}
Since 
$$
K_{t}(x,x)=\sum_{0<\lambda_{l}\leq t}\left(u_{l}(x)\right)^{2}=\|K_{t}(x, \cdot)\|_{2}^{2}
$$
estimates (\ref{Weyl}) and (\ref{Hor})   imply that  there exists $0<C_{1}<C_{2}$ such that
\begin{equation}\label{normdiag1}
C_{1}t^{s/2}\leq\sum_{0<\lambda_{l}\leq t}\left(u_{l}(x)\right)^{2}\leq C_{2}t^{s/2}.
\end{equation}
we also note that 
$$
 dim\ \textbf{E}_{t}(L)\asymp t^{s/2},\>\>\>s=dim\ {\bf M}.
$$

\begin{defn}
For each positive integer $J$,  we let 
\begin{equation}\label{defn}
{\mathcal S}_J({\bf R}^+) = \left\{F \in C^J\left([0,\infty)\right): \|F\|_{{\mathcal S}_J}
:= \sum_{i+j \leq J}\left\|\lambda^i \frac{\partial^j}{\partial\lambda^{j} }F\right\|_{\infty} < \infty\right\}.
\end{equation}

\end{defn}
  For a fixed  $t > 0$ if $J$ is sufficiently large, one can use 
(\ref{Weyl}), (\ref{lamest}) and (\ref{ulest}), to show  that the right side of
\begin{equation}
\label{expoutm}
K_t^F(x,y) := \sum_l F(t^2\lambda_l)u_l(x)u_l(y)
\end{equation}
converges uniformly to a continuous function on ${\bf M} \times {\bf M}$, and in fact that for
some $C_t > 0$,
\begin{equation}
\label{ktt}
\|K_t^F\|_{\infty} \leq C_t\|F\|_{{\mathcal S}_{J}}.
\end{equation}
 Using the spectral theorem,
one can define the bounded operator $F(t^{2}L)$ on $L_2({\bf M})$.  In fact, for $f \in L_2({\bf M})$,
\begin{equation}
\label{ft2F}
[F(t^{2}L)f](x) = \int K_t^{F}(x,y) f(y) dy.
\end{equation}
We call $K_t^{F}$ the kernel of $F(t^2L)$.  $F(t^2L)$ maps $C^{\infty}({\bf M})$
to itself continuously, and may thus be extended to be a map on distributions.  In particular
we may apply $F(t^2L)$ to any $f \in L_p({\bf M}) \subseteq L_1({\bf M})$ (where $1 \leq p \leq \infty$), and by Fubini's theorem
$F(t^2L)f$ is still given by (\ref{ft2F}).

For $x,y \in {\bf M}$, let $d(x,y)$ denote the geodesic distance from $x$ to $y$.
We will frequently need the following fact.

\begin{lem}
If $\mathcal{N}> s$, $x \in {\bf M}$ and $t > 0$, then
\begin{equation}
\label{intest}
\int_{\bf M} \frac{1}{\left[1 + (d(x,y)/t)\right]^\mathcal{N}} dy \leq Ct^{s},\>\>\>s = \dim {\bf M},
\end{equation}
with $C$ independent of $x$ or $t$. 
\end{lem}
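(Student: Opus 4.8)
The plan is to reduce the integral to a sum over geodesic annuli and to use the fact that, on a compact manifold, the Riemannian volume of a geodesic ball of radius $\rho$ is comparable to $\rho^s$ for $\rho$ below the injectivity radius, and is bounded by $\mathrm{vol}({\bf M})$ always. First I would fix $x\in{\bf M}$ and split the domain of integration according to the dyadic size of $d(x,y)/t$: let $A_0 = \{y : d(x,y) \leq t\}$ and, for each integer $k\geq 1$, let $A_k = \{y : 2^{k-1}t < d(x,y) \leq 2^k t\}$. On $A_k$ the integrand is bounded by $[1+2^{k-1}]^{-\mathcal{N}} \leq C 2^{-k\mathcal{N}}$, so the integral over $A_k$ is at most $C 2^{-k\mathcal{N}}\,\mathrm{vol}(A_k)$, and the integral over $A_0$ is at most $\mathrm{vol}(A_0)$.

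Next I would bound $\mathrm{vol}(A_k)$. Since ${\bf M}$ is compact its diameter $D$ is finite, so $A_k=\emptyset$ once $2^{k-1}t > D$; for the nonempty annuli one always has $\mathrm{vol}(A_k)\leq \mathrm{vol}(B(x,2^kt))$. Here I use the standard volume-comparison fact: there is a constant $c_0$, depending only on ${\bf M}$ (via a lower Ricci bound and the injectivity radius, or simply by compactness and a partition-of-unity/local-chart argument), such that $\mathrm{vol}(B(x,\rho)) \leq c_0 \rho^s$ for all $x$ and all $0<\rho\leq D$. Hence $\mathrm{vol}(A_k)\leq c_0 (2^k t)^s$ for every $k$ with $2^{k-1}t\leq D$, and likewise $\mathrm{vol}(A_0)\leq c_0 t^s$.

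Combining, $\int_{\bf M}[1+(d(x,y)/t)]^{-\mathcal{N}}\,dy \leq c_0 t^s + \sum_{k\geq 1} C 2^{-k\mathcal{N}} c_0 (2^k t)^s = c_0 t^s\bigl(1 + C\sum_{k\geq 1} 2^{k(s-\mathcal{N})}\bigr)$, and since $\mathcal{N}>s$ the geometric series converges to a constant depending only on $\mathcal{N}$ and $s$. This yields the claimed bound $Ct^s$ with $C$ independent of $x$ and $t$; note that when $t$ is large (say $t>D$) the estimate is trivial since the left side is at most $\mathrm{vol}({\bf M})\leq \mathrm{vol}({\bf M}) (t/D)^s$.

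I expect the only real point requiring care is the uniform volume bound $\mathrm{vol}(B(x,\rho))\leq c_0\rho^s$ for \emph{all} $x$ and $\rho$ up to the diameter; for small $\rho$ this is immediate from normal coordinates and the smoothness/compactness of the metric, and for $\rho$ bounded below it is trivial because the volume is bounded by $\mathrm{vol}({\bf M})$ while $\rho^s$ is bounded below — so a single constant $c_0$ works throughout. Everything else is the routine dyadic summation carried out above.
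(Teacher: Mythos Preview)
Your proof is correct and follows essentially the same route as the paper: a dyadic decomposition into geodesic annuli $A_k$, the uniform ball-volume bound $\mathrm{vol}(B(x,\rho))\leq c_0\rho^s$ (split into the small-$\rho$ regime via local charts and the large-$\rho$ regime via $\mathrm{vol}({\bf M})<\infty$), and summation of the resulting geometric series in $2^{k(s-\mathcal{N})}$. The paper's proof is terser but the ideas and steps are identical.
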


 \begin{proof}
 Note, that there exist $c_{1}, c_{2}>0$ such that for all $x\in {\bf M}$ and all sufficiently small $r\leq \delta$ one has
 $$
 c_{1}r^{n}\leq |B(x,r)|\leq c_{2}r^{n},
 $$
 and if $r>\delta$
 $$
 c_{3}\delta^{n}\leq |B(x,r)|\leq |\mathbf{M}|\leq c_{4}r^{n}.
 $$

 Fix $x,t$ and let $A_{j}=B(x, 2^{j}t)\setminus B(x, 2^{j-1}t)$, so that, $|A_{j}|\leq c_{4}2^{nj}t^{n}$. Now break the integral into integrals over $B(x,t), A_{1},...$ and noties that $\sum_{j=0}^{\infty}2^{(n-N)j}<\infty$.

 \end{proof}

The following statements can be found in  \cite{gm}-\cite{gp3}.

\begin{lem}
\label{kersize}
Assume  $F \in \mathcal{ S}_{J}(\bf{R}^{+})$ for a sufficiently large $J\in \mathbb{N}$.
For $t > 0$, let $K_t^{F}(x,y)$ be the kernel of $F(t^{2}L)$.  Suppose that  $0 < t \leq 1$.
Then for some $C > 0$,
\begin{equation}
\label{kersizeway}
|K_t^{F}(x,y)| \leq \frac{Ct^{-s}}{\left[ 1+\frac{d(x,y)}{t} \right]^{s+1}},\>\>\>s=dim\>{\bf M},
\end{equation}
for all $t$ and all $x,y \in {\bf M}$.
\end{lem}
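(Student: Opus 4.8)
The plan is to establish the Gaussian-type off-diagonal decay of $K_t^F$ by exploiting the fact that $F \in \mathcal{S}_J(\mathbf{R}^+)$ means the "symbol" $F(t^2\lambda)$ decays rapidly, and translating this into spatial decay via the finite propagation speed of the wave equation associated to the elliptic operator $L$ — or, more elementarily, via repeated integration by parts against powers of $L$ and the fundamental solution's support properties. Concretely, I would first note that on the compact manifold, when $d(x,y) \leq t$, the bound $|K_t^F(x,y)| \leq Ct^{-s}$ follows directly from (\ref{ktt}) together with a rescaling argument: localizing in a coordinate patch of radius comparable to $t$ and using the Weyl-type estimate $\dim \mathbf{E}_{t^{-2}}(L) \asymp t^{-s}$, one sees that $\|K_t^F\|_\infty$ itself is $O(t^{-s})$ (not merely $O(C_t)$); indeed one can decompose $F$ as a sum of dyadic pieces supported where $t^2\lambda_l \sim 2^k$ and sum the resulting $\ell^1$ estimates using (\ref{normdiag1}).

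For the off-diagonal regime $d(x,y) > t$, the key step is to write, for any integer $m$, $F(t^2\lambda) = (t^2\lambda)^{-m} G_m(t^2\lambda)$ on the support away from $0$, more carefully splitting $F = F_0 + F_\infty$ with $F_0$ supported near the origin (where its kernel is essentially the kernel of a smoothing operator and decays fast trivially) and $F_\infty$ vanishing to high order at $0$; then $K_t^{F_\infty}(x,y) = [t^{-2m} L^{-m} \widetilde{F}(t^2 L)]$-type kernel, and applying $L^N_x$ (the Laplace–Beltrami operator in the $x$ variable) $N$ times converts decay of $\widetilde F$ in $\lambda$ into the factor we need, while the elliptic regularity estimate (\ref{ulest}) controls the eigenfunction sup-norms. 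The cleanest route, which I expect the paper takes, is the standard one: represent $F(t^2L)$ via the wave operator, $F(t^2L) = c\int \widehat{g}(\xi)\cos(\xi t\sqrt{L})\,d\xi$ for a suitable even Schwartz-type $g$ with $g(\eta^2)=F(\eta^2)$-related, and use that $\cos(\xi t \sqrt L)$ has kernel supported in $\{d(x,y) \leq |\xi| t\}$ by finite propagation speed; then for $d(x,y) > t$ only the tail $|\xi| > d(x,y)/t$ contributes, and the rapid decay of $\widehat{g}$ (guaranteed by $F \in \mathcal{S}_J$ for $J$ large enough) yields the bound $\leq C_N t^{-s}(d(x,y)/t)^{-N}$ for any $N$, in particular $N = s+1$, after also handling the $t^{-s}$ normalization by the same dyadic/Weyl counting as above.

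Combining the two regimes gives precisely $|K_t^F(x,y)| \leq Ct^{-s}[1 + d(x,y)/t]^{-(s+1)}$ once one checks that on $d(x,y)\le t$ the denominator $[1+d(x,y)/t]^{s+1}$ is bounded above and below by absolute constants, and on $d(x,y) > t$ it is comparable to $(d(x,y)/t)^{s+1}$. The restriction $0 < t \leq 1$ enters to ensure that the local geometry of $\mathbf{M}$ (injectivity radius, comparability of geodesic balls to Euclidean balls, validity of the finite-propagation/parametrix constructions on scale $t$) is uniform; this is exactly where the constants $c_i$ from the volume estimate in the previous lemma are used.

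I expect the main obstacle — and the part requiring genuine care rather than citation — to be upgrading the crude bound (\ref{ktt}), which only gives $\|K_t^F\|_\infty \leq C_t \|F\|_{\mathcal{S}_J}$ with an unspecified $t$-dependent constant, to the sharp $t^{-s}$ scaling uniformly for $0 < t \leq 1$. This is where one genuinely needs Hörmander's sharp spectral function asymptotics (\ref{Hor})/(\ref{normdiag1}) rather than soft Sobolev embedding: the point is that $\|K_t^F(x,\cdot)\|_2^2 = \sum_l F(t^2\lambda_l)^2 (u_l(x))^2$ and grouping eigenvalues dyadically, $\sum_{2^{k-1} < t^2\lambda_l \le 2^k}(u_l(x))^2 \asymp (2^k t^{-2})^{s/2} = 2^{ks/2}t^{-s}$ by (\ref{normdiag1}), so $\|K_t^F(x,\cdot)\|_2^2 \lesssim t^{-s}\sum_k 2^{ks/2}\sup_{\lambda\sim 2^k t^{-2}}|F(t^2\lambda)|^2 \lesssim t^{-s}\|F\|_{\mathcal S_J}^2$ provided $J > s/2$; a parallel argument applied to $K_t^F \cdot$ (its own kernel under composition, or directly to $|K_t^F(x,y)|$ via Cauchy–Schwarz on the wave representation) then promotes this $L^2$-in-$y$ bound to the pointwise bound with the stated decay. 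Once that scaling is secured, the finite-propagation-speed argument for the off-diagonal decay is routine and well documented in \cite{gm}--\cite{gp3}.
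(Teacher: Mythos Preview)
The paper does not actually prove this lemma: the sentence immediately preceding it reads ``The following statements can be found in \cite{gm}-\cite{gp3},'' and no argument is supplied. So there is no in-paper proof to compare your proposal against; the result is imported wholesale from the Geller--Mayeli and Geller--Pesenson references.

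Your sketch is, in outline, the standard proof that appears in those references: a uniform on-diagonal bound $|K_t^F(x,y)|\leq Ct^{-s}$ extracted from H\"ormander's spectral-function asymptotics (your dyadic argument using (\ref{normdiag1}) is exactly the right mechanism, and you correctly flag this step as the one requiring genuine input beyond (\ref{ktt})), combined with off-diagonal decay via the wave representation $F(t^2L)=c\int\widehat\psi(\xi)\cos(\xi t\sqrt{L})\,d\xi$ and finite propagation speed. One point worth tightening: you cannot literally restrict the integral to $|\xi|>d(x,y)/t$ and then bound the distributional kernel of $\cos(\xi t\sqrt{L})$ pointwise. The clean version is to split $\widehat\psi$ by a smooth cutoff at scale $R\sim d(x,y)/t$, note that the compactly supported piece produces a kernel that vanishes at $(x,y)$ by finite propagation (this is the content of Theorem~\ref{wavprop} here), and control the remaining piece by observing that its inverse Fourier transform has $\mathcal S_{J'}$-norm $O(R^{-N})$ and feeding that back into the on-diagonal estimate. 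Your last paragraph essentially anticipates this loop. So your proposal is substantively correct and matches the argument in the cited sources; it simply goes further than the present paper, which declines to reproduce the proof.
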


\begin{lem}
\label{Lalphok}
Assume  $F \in \mathcal{ S}_{J}(\bf{R}^{+})$ for a sufficiently large $J\in \mathbb{N}$. Consider  $1 \leq \alpha \leq \infty$, with conjugate index $\alpha'$.
There  exists  a constant $C > 0$ such that for all $0<t\leq 1$
\begin{equation}
\label{kint3a}
\left(\int_{{\bf M}}  |K_t^{F}(x,y)|^{\alpha}dy\right)^{1/\alpha} \leq Ct^{-s/\alpha'} \ \ \ \ \ \ \ \ \ \ \ \ \ \ \ \ \ \ \ \ \mbox{for all } x, 
\end{equation}
and
\begin{equation}
\label{kint4}
\left(\int_{{\bf M}}  |K_t^{F}(x,y)|^{\alpha}dx\right)^{1/\alpha} \leq Ct^{-s/\alpha'} \ \ \ \ \ \ \ \ \ \ \ \ \ \ \ \ \ \ \ \ \mbox{for all } y.
\end{equation}
\end{lem}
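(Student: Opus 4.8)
The plan is to deduce both inequalities directly from the pointwise kernel bound of Lemma \ref{kersize} together with the integral estimate (\ref{intest}). First I would record that, by Lemma \ref{kersize}, there is $C>0$ with
\begin{equation*}
|K_t^{F}(x,y)|^{\alpha} \leq \frac{C^{\alpha}\, t^{-s\alpha}}{\left[1+\frac{d(x,y)}{t}\right]^{\alpha(s+1)}}
\end{equation*}
for all $0<t\leq 1$ and all $x,y\in{\bf M}$, valid provided $J$ is taken large enough for Lemma \ref{kersize} to hold. Since $\alpha\geq 1$ we have $\alpha(s+1)\geq s+1>s$, so (\ref{intest}) applies with $\mathcal{N}=\alpha(s+1)$ and yields
\begin{equation*}
\int_{{\bf M}}\frac{dy}{\left[1+\frac{d(x,y)}{t}\right]^{\alpha(s+1)}}\leq C' t^{s}
\end{equation*}
with $C'$ independent of $x$ and $t$.

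Combining the two displays gives $\int_{{\bf M}}|K_t^F(x,y)|^{\alpha}\,dy\leq C^{\alpha}C'\,t^{-s\alpha+s}=C^{\alpha}C'\,t^{-s(\alpha-1)}$, and taking $\alpha$-th roots produces
\begin{equation*}
\left(\int_{{\bf M}}|K_t^F(x,y)|^{\alpha}\,dy\right)^{1/\alpha}\leq (C^{\alpha}C')^{1/\alpha}\,t^{-s(\alpha-1)/\alpha}=(C^{\alpha}C')^{1/\alpha}\,t^{-s/\alpha'},
\end{equation*}
because $(\alpha-1)/\alpha=1-1/\alpha=1/\alpha'$. The resulting constant depends only on $\alpha$, $s$ and the constants furnished by Lemma \ref{kersize} and (\ref{intest}), hence is uniform over $0<t\leq 1$; this proves (\ref{kint3a}) for $1\leq\alpha<\infty$. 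The endpoint $\alpha=\infty$ (so $\alpha'=1$) is immediate: Lemma \ref{kersize} gives $|K_t^F(x,y)|\leq C t^{-s}$ for all $x,y$, which is precisely the claimed bound $t^{-s/\alpha'}=t^{-s}$.

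Finally, the second estimate (\ref{kint4}) follows from the first with no further work. Since the eigenfunctions $u_l$ are real-valued, the kernel $K_t^F(x,y)=\sum_l F(t^2\lambda_l)u_l(x)u_l(y)$ is symmetric, $K_t^F(x,y)=K_t^F(y,x)$; alternatively, one may simply observe that both Lemma \ref{kersize} and (\ref{intest}) are symmetric in the two variables (recall $d(x,y)=d(y,x)$), so the argument above, run with the roles of $x$ and $y$ interchanged, delivers (\ref{kint4}). I do not expect any real obstacle here: the statement is essentially a bookkeeping exercise once the two preceding lemmas are in hand, the only points requiring a moment's care being the verification that the exponent $\alpha(s+1)$ exceeds $s$ so that (\ref{intest}) is applicable, and the separate treatment of the endpoint $\alpha=\infty$.
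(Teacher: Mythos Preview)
Your proof is correct and follows essentially the same approach as the paper: apply the pointwise bound of Lemma~\ref{kersize}, integrate using (\ref{intest}) for $\alpha<\infty$, handle $\alpha=\infty$ directly from the pointwise bound, and deduce (\ref{kint4}) from (\ref{kint3a}) by the symmetry $K_t^F(x,y)=K_t^F(y,x)$. The only additional detail you spell out explicitly is the check that $\alpha(s+1)>s$ so that (\ref{intest}) applies, which the paper leaves implicit.
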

\begin{proof}  We need only prove (\ref{kint3a}), since $K_t^{F}(y,x) = K_t^{F}(x,y)$.

If $\alpha < \infty$,
(\ref{kint3a}) follows from Lemma \ref{kersize}, which tells us that
\[
\int_{{\bf M}}  |K_t^{F}(x,y)|^{\alpha}dy 
\leq  C \int_{\bf M} \frac{t^{-s\alpha}}{\left[1 + (d(x,y)/t)\right]^{\alpha(s+1)}} dy \leq Ct^{s(1-\alpha)} \]
with $C$ independent of $x$ or $t$, by (\ref{intest}).

If $\alpha = \infty$, the left side of (\ref{kint3a}) is as usual to be interpreted as the $L_{\infty}$ norm
of $h_{t,x}(y) = K_t^{F}(x,y)$.  But in this case the conclusion is immediate from 
Lemma \ref{kersize}.  

This completes the proof.
\end{proof}

\begin{lem} 
If $C_{1}, C_{2} $ are the same as in (\ref{normdiag1}) and  if 
\begin{equation}\label{condition1}
b/a>\left (C_{2}/C_{1}\right)^{2/s},\>\>\>s=dim\ {\bf M},
\end{equation}
then 
\begin{equation}
\sum_{a/t^{2}<\lambda_{l}\leq b/t^{2}}|u_{l}(x)|^{2}\geq \left(C_{1}b^{s/2}-C_{2}a^{s/2}\right)t^{-s}>0,\>\>\>s=dim\ {\bf M}.
\end{equation}

\end{lem}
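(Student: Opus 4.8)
The plan is to obtain the lower bound by writing the spectral sum over the dyadic-type window $(a/t^2, b/t^2]$ as a difference of two truncated sums and then invoking the two-sided estimate (\ref{normdiag1}) at the two scales $b/t^2$ and $a/t^2$. Concretely, I would first observe the elementary telescoping identity
\[
\sum_{a/t^{2}<\lambda_{l}\leq b/t^{2}}|u_{l}(x)|^{2}
=\sum_{0<\lambda_{l}\leq b/t^{2}}|u_{l}(x)|^{2}-\sum_{0<\lambda_{l}\leq a/t^{2}}|u_{l}(x)|^{2},
\]
which is valid because the two ranges of summation differ exactly by the window $(a/t^2,b/t^2]$ and the constant eigenfunction $u_0$ (with $\lambda_0=0$) is excluded from both.

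Next I would apply the lower estimate in (\ref{normdiag1}) to the first sum on the right, with parameter $b/t^2$, giving
\[
\sum_{0<\lambda_{l}\leq b/t^{2}}|u_{l}(x)|^{2}\geq C_{1}\left(\frac{b}{t^{2}}\right)^{s/2}=C_{1}b^{s/2}t^{-s},
\]
and the upper estimate in (\ref{normdiag1}) to the second sum, with parameter $a/t^2$, giving
\[
\sum_{0<\lambda_{l}\leq a/t^{2}}|u_{l}(x)|^{2}\leq C_{2}\left(\frac{a}{t^{2}}\right)^{s/2}=C_{2}a^{s/2}t^{-s}.
\]
Subtracting these yields $\sum_{a/t^{2}<\lambda_{l}\leq b/t^{2}}|u_{l}(x)|^{2}\geq\left(C_{1}b^{s/2}-C_{2}a^{s/2}\right)t^{-s}$, which is the claimed inequality.

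It then remains only to check that the right-hand side is strictly positive, and this is exactly where the hypothesis (\ref{condition1}) enters: the condition $b/a>(C_{2}/C_{1})^{2/s}$ is equivalent, upon raising both sides to the power $s/2$, to $(b/a)^{s/2}>C_{2}/C_{1}$, i.e.\ to $C_{1}b^{s/2}>C_{2}a^{s/2}$, hence $C_{1}b^{s/2}-C_{2}a^{s/2}>0$. I do not anticipate any serious obstacle here; the only points requiring a word of care are that the two-sided bound (\ref{normdiag1}) is available (with the \emph{same} constants $C_1,C_2$) at both scales $a/t^2$ and $b/t^2$ uniformly in $x\in{\bf M}$, and that the telescoping step is performed with the constant eigenfunction consistently omitted so that no spurious term is introduced.
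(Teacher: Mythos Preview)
Your proof is correct and follows exactly the same approach as the paper: write the windowed sum as a difference of two truncated sums and apply the lower bound in (\ref{normdiag1}) to the first and the upper bound to the second. The paper's proof is simply a one-line version of your argument, without the explicit verification of positivity or the remarks about the constant eigenfunction.
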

\begin{proof}
By the inequalities (\ref{normdiag1}) we have

$$
\sum_{a/t^{2}<\lambda_{l}\leq b/t^{2}}|u_{l}(x)|^{2}=\sum_{0<\lambda_{l}\leq b/t^{2}}|u_{l}(x)|^{2}-\sum_{0<\lambda_{l}\leq a/t^{2}}|u_{l}(x)|^{2}\geq 
\left(C_{1}b^{s/2}-C_{2}a^{s/2}\right)t^{-s}.
$$
Lemma is proven.
\end{proof}

\begin{lem} 
For any $0<a<b$ and sufficiently large  $J\in \mathbb{N}$ 
there exists an even function $F$  in $\mathcal{S}_{J}(\bf{R})$ such that $\widehat{F}$ is supported in an $(-\Lambda, \Lambda)$ for some $\Lambda>0$  and the inequality  
$$\label{intineq}
 0<c_{1}\leq |F(\lambda)|\leq c_{2}
$$
holds for all $a\leq \lambda\leq b$ for some $c_{1}, \ c_{2}>0$.

\end{lem}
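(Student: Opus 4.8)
The plan is to construct $F$ as the Fourier transform of a fixed smooth bump, rescaled so that the one sign it is guaranteed to have near the origin spreads out to cover all of $[a,b]$, while its Fourier support simultaneously shrinks.

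First I would fix, once and for all, an even, nonnegative $\theta\in C_c^\infty({\bf R})$ with $\mathrm{supp}\,\theta\subseteq[-1,1]$ and $\theta(0)>0$, and let $g$ be its Fourier transform. Since $\theta$ is real and even, $g$ is real, even and belongs to the Schwartz class, and it is band-limited with $\mathrm{supp}\,\widehat g\subseteq[-1,1]$ (indeed $\widehat g$ is a constant multiple of $\theta$). Moreover $g(0)$ is a positive multiple of $\int\theta>0$, so by continuity there is some $\rho>0$ with $g(\lambda)>0$ for all $\lambda\in[-\rho,\rho]$.

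Next, given $0<a<b$, I would pick $\sigma>b/\rho$ and set $F(\lambda)=g(\lambda/\sigma)$. Then $F$ is again even and Schwartz, hence $F\in\mathcal S_J({\bf R})$ for every $J$ (all the seminorms $\|\lambda^i\partial^j F\|_\infty$ with $i+j\le J$ are finite); its Fourier transform $\widehat F(\xi)=\sigma\,\widehat g(\sigma\xi)$ is supported in $[-1/\sigma,1/\sigma]\subset(-\Lambda,\Lambda)$ for, say, $\Lambda:=2/\sigma$; and $F(\lambda)=g(\lambda/\sigma)>0$ whenever $|\lambda|\le\sigma\rho$, in particular for every $\lambda\in[a,b]\subseteq[-b,b]$. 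Finally, since $F$ is continuous and strictly positive on the compact set $[a,b]$, the numbers $c_1:=\min_{[a,b]}F$ and $c_2:=\max_{[a,b]}F$ satisfy $0<c_1\le c_2\le\|F\|_\infty<\infty$, and $|F(\lambda)|=F(\lambda)\in[c_1,c_2]$ on $[a,b]$, which is what is claimed.

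The only delicate point is the apparent tension between demanding that $\widehat F$ be compactly supported --- which by Paley--Wiener forces $F$ to extend to an entire function of exponential type, and hence (unless it is essentially constant) to have infinitely many real zeros --- and demanding that $F$ be bounded away from zero on $[a,b]$. The dilation in the previous step is exactly what reconciles the two: one only needs non-vanishing on a \emph{fixed} compact interval, and rescaling a band-limited function that is positive in a neighbourhood of a single point both widens its positivity interval and narrows its Fourier support, so for $\sigma$ large enough $[a,b]$ falls inside the positivity region. (Alternatively, keeping $\Lambda$ fixed, one could take $F$ to be a finite sum of translates of $\bigl(\sin(\Lambda\lambda/(2m))/(\Lambda\lambda/(2m))\bigr)^{2m}$ arranged so that the translated squared sines never vanish simultaneously, which likewise yields $F\in\mathcal S_{2m}$; but the dilation argument is shorter.)
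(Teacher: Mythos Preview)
Your argument is correct. Taking $F(\lambda)=g(\lambda/\sigma)$ with $g=\widehat{\theta}$ the Fourier transform of a fixed even nonnegative bump $\theta\in C_c^\infty([-1,1])$ and $\sigma>b/\rho$ gives an even Schwartz function, hence $F\in\mathcal S_J(\mathbf{R})$ for every $J$; its Fourier transform is a dilate of $\theta$ and thus compactly supported; and $F=g(\cdot/\sigma)>0$ on $[-\sigma\rho,\sigma\rho]\supseteq[a,b]$, so $c_1=\min_{[a,b]}F>0$ and $c_2=\max_{[a,b]}F<\infty$ work.

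The paper follows a different route: it starts from an even $G\in\mathcal S_J(\mathbf{R})$ with $G\equiv 1$ on $(a,b)$, then approximates $\widehat G$ in the $\mathcal S_J$-topology by a compactly supported even $\widehat F$, and invokes continuity of the Fourier transform on $\mathcal S_J$ to conclude that $F$ stays uniformly close to $1$ on $[a,b]$. Your construction is more explicit and sidesteps the somewhat delicate assertion that the Fourier transform maps $\mathcal S_J(\mathbf{R})$ continuously into itself for a fixed finite $J$ (the transform exchanges smoothness and decay, so this claim is not automatic); in exchange you give up any control over how close $F$ is to the constant function $1$, but the lemma does not ask for that. The paper's approach, on the other hand, would generalise more readily if one needed $F$ to approximate a prescribed profile on $[a,b]$ rather than merely be bounded away from zero there.
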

\begin{proof}
For a sufficiently large $J$  consider an even  function $G \in \mathcal{S}_{J}(\bf{R})$ which is identical one on $(a,\ b)$.  Since Fourier transform maps continuously  $\mathcal{S}_{J}(\bf{R})$ into itself  one can find an even smooth function $\widehat{F}\in \mathcal{S}_{J}(\bf{R})$ which is supported in an $(-\Lambda, \ \Lambda)$ for some $\Lambda>0$ and which is sufficiently close  to the Fourier transform $\widehat{G}\in \mathcal{S}_{J}(\bf{R})$ in the topology of $\mathcal{S}_{J}(\bf{R})$. Clearly, the function $F$ will have all the desired properties. It proves  Lemma.

\end{proof}

\begin{thm}\label{lowerestimate}
For a sufficiently large $J\in \mathbb{N}$ there exists an even  function $F$ in the  space $\mathcal{S}_{J}(\bf{R})$ such that $\widehat{F}$ has support in an $(-\Lambda,  \Lambda)$ and for which 
\begin{equation}
\left( \int_{{\bf M}}  \left|K_{t}^{F}(x,y)\right|^{\alpha}dy\right)^{1/\alpha}\asymp t^{-s/\alpha^{\prime}},\>\>\>\>1/\alpha+1/\alpha^{\prime}=1,\>\>\>0< t\leq 1,
\end{equation}
for all $1\leq \alpha\leq\infty$.
\end{thm}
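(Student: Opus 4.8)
The plan is to establish the upper bound and the matching lower bound separately. The upper bound $\left(\int_{\bf M}|K_t^F(x,y)|^\alpha dy\right)^{1/\alpha}\ll t^{-s/\alpha'}$ is already contained in Lemma \ref{Lalphok}, which applies to \emph{any} $F\in\mathcal{S}_J(\mathbf{R}^+)$ with $J$ large; so all the work is in producing a \emph{specific} $F$ for which the reverse inequality $\left(\int_{\bf M}|K_t^F(x,y)|^\alpha dy\right)^{1/\alpha}\gg t^{-s/\alpha'}$ holds uniformly in $x$ and in $0<t\le 1$. The natural candidate is the function supplied by the preceding lemma: choose $0<a<b$ with $b/a>(C_2/C_1)^{2/s}$ and take the even $F\in\mathcal{S}_J(\mathbf{R})$ with $\widehat F$ compactly supported in $(-\Lambda,\Lambda)$ and $0<c_1\le|F(\lambda)|\le c_2$ for $a\le\lambda\le b$. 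I will then show this $F$ does the job.

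For the lower bound the key case is $\alpha=2$, where everything is exact by orthonormality. Indeed
\[
\int_{\bf M}|K_t^F(x,y)|^2\,dy=\sum_l|F(t^2\lambda_l)|^2|u_l(x)|^2\ge c_1^2\sum_{a/t^2<\lambda_l\le b/t^2}|u_l(x)|^2,
\]
and the penultimate lemma gives $\sum_{a/t^2<\lambda_l\le b/t^2}|u_l(x)|^2\ge(C_1b^{s/2}-C_2a^{s/2})t^{-s}$, a strictly positive multiple of $t^{-s}$ by our choice of $b/a$. Hence $\left(\int_{\bf M}|K_t^F(x,y)|^2dy\right)^{1/2}\gg t^{-s/2}$, uniformly in $x$ and $t\in(0,1]$, which is the claim for $\alpha=2$ (note $\alpha'=2$ there as well). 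This also forces the diagonal value $K_t^F(x,x)=\sum_l F(t^2\lambda_l)|u_l(x)|^2$ to be comparable to $t^{-s}$ from below, a fact I will use below; combined with Lemma \ref{kersize} it is bounded above by $Ct^{-s}$ too.

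To pass from $\alpha=2$ to general $\alpha$ I will interpolate between the two endpoints that I can control directly. At $\alpha=\infty$ the lower bound $\|K_t^F(x,\cdot)\|_\infty\ge|K_t^F(x,x)|\gg t^{-s}$ is immediate from the diagonal estimate just obtained (here $\alpha'=1$, matching $t^{-s/\alpha'}=t^{-s}$). For $1\le\alpha<2$ I would argue by a Hölder/duality sandwich: write $\int|K_t^F(x,y)|^2dy\le\|K_t^F(x,\cdot)\|_\alpha\,\|K_t^F(x,\cdot)\|_{\alpha'}$ by Hölder with exponents $\alpha/1$ is not quite right, so more carefully use $\|g\|_2^2\le\|g\|_\alpha^\theta\|g\|_\infty^{1-\theta}$-type bounds, i.e. for $1\le\alpha\le 2$ interpolate $L^\alpha$ between $L^2$ and $L^1$... the cleanest route is: $t^{-s}\ll\|K_t^F(x,\cdot)\|_2^2\le\|K_t^F(x,\cdot)\|_\alpha^{a}\|K_t^F(x,\cdot)\|_\beta^{b}$ for a suitable pair where the other factor is already bounded above by a power of $t$ via Lemma \ref{Lalphok}, and solve for $\|K_t^F(x,\cdot)\|_\alpha$. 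For $2<\alpha<\infty$ one interpolates $L^\alpha$ between $L^2$ (lower bound known) and $L^\infty$ (lower bound known), again using the matching upper bounds from Lemma \ref{Lalphok} to pin the exponent; since $-s/\alpha'$ is linear in $1/\alpha$ and agrees with the endpoint exponents at $\alpha=2,\infty$, the interpolated lower bound comes out with exactly the exponent $-s/\alpha'$. The second displayed integral (integration in $x$) is identical since $K_t^F$ is symmetric.

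The main obstacle is the $\alpha\notin\{2,\infty\}$ lower bound: a lower bound on an $L^\alpha$ norm is not interpolation-friendly in the usual direction, so the argument must run it through Hölder inequalities relating $\|K_t^F(x,\cdot)\|_\alpha$ to $\|K_t^F(x,\cdot)\|_2$ (or to the diagonal value $K_t^F(x,x)$) and to a higher $L^\beta$ norm whose \emph{upper} bound is controlled by Lemma \ref{Lalphok}; one must check that the resulting exponent bookkeeping closes up to give precisely $t^{-s/\alpha'}$ and not a worse power, and that all constants are uniform in $x\in{\bf M}$ and $t\in(0,1]$. Everything else — the construction of $F$, the $\alpha=2$ and $\alpha=\infty$ cases, the symmetry reduction — is routine given the lemmas already proved.
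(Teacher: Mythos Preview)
Your overall strategy coincides with the paper's: take the $F$ from the preceding lemma, use orthonormality to nail the case $\alpha=2$, and then leverage H\"older together with the \emph{upper} bounds of Lemma~\ref{Lalphok} to push the lower bound to all other $\alpha$. That part is fine, and your worry about the ``exponent bookkeeping'' is unfounded --- it closes up exactly (for $2<\alpha$ take $q=1,\ r=\alpha$ in $\|K\|_2\le\|K\|_q^{\theta}\|K\|_r^{1-\theta}$; for $1\le\alpha<2$ take $q=\alpha,\ r=\infty$; in both cases the resulting power of $t$ is $-s/\alpha'$).

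There is, however, a genuine gap in your $\alpha=\infty$ argument. You write that the $\alpha=2$ computation ``also forces the diagonal value $K_t^F(x,x)=\sum_l F(t^2\lambda_l)|u_l(x)|^2$ to be comparable to $t^{-s}$ from below''. It does not. The $\alpha=2$ identity gives you
\[
\int_{\bf M}|K_t^F(x,y)|^2\,dy=\sum_l |F(t^2\lambda_l)|^{2}\,|u_l(x)|^2,
\]
with $|F|^2$, not $F$. The preceding lemma only guarantees $|F(\lambda)|\ge c_1$ on $[a,b]$; nothing prevents $F$ from being negative outside (or even inside) that window, so the signed sum $K_t^F(x,x)$ can suffer cancellation and you have no lower bound for it. Consequently your inequality $\|K_t^F(x,\cdot)\|_\infty\ge |K_t^F(x,x)|\gg t^{-s}$ is unjustified. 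The paper avoids the diagonal entirely: from the elementary inequality $\|K_t^F(x,\cdot)\|_2^2\le\|K_t^F(x,\cdot)\|_1\,\|K_t^F(x,\cdot)\|_\infty$ together with the upper bound $\|K_t^F(x,\cdot)\|_1\le C$ from Lemma~\ref{Lalphok} one gets $\|K_t^F(x,\cdot)\|_\infty\gg t^{-s}$, and symmetrically $\|K_t^F(x,\cdot)\|_1\gg 1$ using the upper bound on the $L_\infty$ norm. Once you replace your diagonal step by this, the rest of your outline goes through and matches the paper.
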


\begin{proof}
Due to Lemma \ref{Lalphok} we have to prove only the lower estimate. Assume that $0<a<b$ and  satisfy (\ref{condition1}). 
Let $F$ be a function whose existence is proved in the previous Lemma for  this  $(a, b)$. 
For $\alpha=2$ one has 
$$
\int_{{\bf M}} |K^{F}_{t}(x,y)|^{2}dy=\sum_{l}|F(t^{2}\lambda_{l})|^{2}|u_{l}(x)|^{2}\geq \sum _{l: a/t^{2}\leq \lambda_{l}\leq b/t^{2}}|F(t^{2}\lambda_{l})|^{2}|u_{l}(x)|^{2}\geq 
$$
$$
c_{1}^{2}\sum _{l: a/t^{2}\leq \lambda_{l}\leq b/t^{2}}|u_{l}(x)|^{2}\geq c_{1}^{2}\left(C_{1}b^{s/2}-C_{2}a^{s/2}\right)t^{-s}>0.
$$
Using the inequality
\begin{equation}
\|K^{F}_{t}(x, \cdot)\|_{2}^{2}\leq \|K^{F}_{t}(x, \cdot)
\|_{1}\|K^{F}_{t}(x, \cdot)
\|_{\infty},
\end{equation}
and Lemma \ref{Lalphok} for $\alpha=1$ we obtain for $\alpha=1$
$$
\|K^{F}_{t}(x, \cdot)
\|_{1}\geq \frac{\|K^{F}_{t}(x, \cdot)\|_{2}^{2}}{\|K^{F}_{t}(x, \cdot)
\|_{\infty}}\geq C_{3}>0,
$$
and similarly for $\alpha=\infty$.

Note, that if $q<2<r$, and $ 0<\theta<1$ is  such that $\theta/q+(1-\theta)/r=1/2$, then by the H\"{o}lder inequality
\begin{equation}\label{Hold}
\|K^{F}_{t}(x, \cdot)\|_{2}\leq \|K^{F}_{t}(x, \cdot)
\|_{q}^{\theta}\|K^{F}_{t}(x, \cdot)
\|_{r}^{1-\theta}.
\end{equation}

Assume now that  $2<\alpha\leq \infty$. Then  for $q=1, \>\>r=\alpha$,  we have $\alpha^{\prime}<2(1-\theta)$ and using lower and upper estimates for $p=2$ and $p=1$ respectively we obtain
$$
\|K^{F}_{t}(x, \cdot)\|_{\alpha}\geq \frac{  \|K^{F}_{t}(x, \cdot)   
\|_{2}^{1/(1-\theta)}    }{          \|K^{F}_{t}(x, \cdot)\|_{1}^{\theta/(1-\theta)}       }\geq C_{4}t^{-s/\alpha^{\prime}}
$$
for some $C_{4}>0$.

The case $0\leq\alpha<2$  is handled in a similar way by setting in (\ref{Hold}) $q=\alpha,\>\>r=\infty.$ Lemma is proved.

\end{proof}

The next Theorem is playing an important role in this paper (see also \cite{gm}, \cite{gp1}).

\begin{thm}
\label{wavprop}
Suppose   that for a sufficiently large $J\in \mathbb{N}$ a function $\psi(\xi) = F(\xi^2)$ belongs to $ \mathcal{S}_{J}(\bf{R})$,  is even, and satisfies $\ supp\  \hat{\psi} \subseteq(-1,1)$.  
For $t > 0$, let
$K_t^{F}(x,y)$ be the kernel of $\psi(t\sqrt{L}) = F(t^2L)$.  Then for some $C_0 > 0$, if $d(x,y) > C_0t$,
then $K_t^{F}(x,y) = 0$.
\end{thm}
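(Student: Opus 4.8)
The plan is to deduce the statement from the classical finite propagation speed of the wave equation $\partial_t^2 u + Lu = 0$ via Fourier inversion in the spectral variable. Since $\psi\in\mathcal{S}_{J}({\bf R})$ is even, its Fourier transform $\widehat{\psi}$ is even, continuous, and by hypothesis supported in $(-1,1)$; Fourier inversion then gives, for every real $\xi$,
$$
\psi(\xi)=c\int_{-1}^{1}\widehat{\psi}(\tau)\cos(\xi\tau)\,d\tau,
$$
where $c$ is the normalization constant of the chosen Fourier convention (the $\sin$-part of $e^{i\xi\tau}$ integrates to zero against the even function $\widehat{\psi}$). Applying this pointwise identity with $\xi=t\sqrt{\lambda_l}$ and using the spectral decomposition of $L$ — the family $\cos(t\tau\sqrt{L})$ being uniformly bounded and strongly continuous in $\tau$, and $\widehat{\psi}$ integrable with compact support, so that the integral converges in the strong operator topology — one obtains the operator identity
$$
\psi(t\sqrt{L})=F(t^{2}L)=c\int_{-1}^{1}\widehat{\psi}(\tau)\,\cos(t\tau\sqrt{L})\,d\tau .
$$

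First I would recall the finite propagation speed of the cosine propagator of $L$: for every $\sigma\in{\bf R}$, the Schwartz kernel of $\cos(\sigma\sqrt{L})$ is supported in $\{(x,y):d(x,y)\le|\sigma|\}$. This is standard and follows from the local energy estimate for solutions of $u_{tt}+Lu=0$ (a Gronwall argument), which forces a solution at time $\sigma$ with Cauchy data supported in a closed set $E$ to remain supported in the closed $|\sigma|$-neighborhood of $E$; see \cite{gm}, \cite{gp1}. Taking $\sigma=t\tau$ with $|\tau|<1$, each operator occurring in the integrand above has kernel supported in $\{d(x,y)\le|t\tau|\}\subseteq\{d(x,y)\le|t|\}$.

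It then remains to transfer this support property through the $\tau$-integral. Given $f,h\in C^{\infty}({\bf M})$ with $d(\mathrm{supp}\,f,\mathrm{supp}\,h)>|t|$, pairing the operator identity with $f$ and $h$ and interchanging the pairing with the integral (legitimate since $\widehat{\psi}\in L_1$ is compactly supported and $\|\cos(t\tau\sqrt{L})\|_{L_2\to L_2}\le 1$) yields $\langle\psi(t\sqrt{L})f,h\rangle=c\int_{-1}^{1}\widehat{\psi}(\tau)\langle\cos(t\tau\sqrt{L})f,h\rangle\,d\tau=0$, since each inner pairing vanishes by the previous paragraph. Because $K_t^{F}$ is continuous on ${\bf M}\times{\bf M}$ by (\ref{ktt}), this forces $K_t^{F}(x,y)=0$ whenever $d(x,y)>|t|$; thus the assertion holds with $C_0=1$, and a general constant $C_0$ merely absorbs whatever factor appears in the Fourier inversion formula.

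The only genuinely nontrivial ingredient is the finite propagation speed of $\cos(\sigma\sqrt{L})$; granting it, the rest is bookkeeping. The one point deserving care is the justification of the spectral integral representation and of the interchange of the $L_2$-pairing with the $\tau$-integration, but both follow at once from the uniform bound $\|\cos(t\tau\sqrt{L})\|_{L_2\to L_2}\le 1$ together with the integrability and compact support of $\widehat{\psi}$.
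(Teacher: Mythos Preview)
Your argument is correct and follows essentially the same route as the paper: both express $\psi(t\sqrt{L})$ as $c\int_{-1}^{1}\widehat{\psi}(\tau)\cos(t\tau\sqrt{L})\,d\tau$ via Fourier inversion and then invoke finite propagation speed for the wave group $\cos(\sigma\sqrt{L})$; the paper extracts the kernel support by applying the operator to a bump supported in a shrinking ball $B_\varepsilon(y)$, while you do the equivalent thing by pairing with two test functions of separated supports and using continuity of $K_t^F$. One small slip: the constant $C_0$ does not come from the Fourier inversion normalization but from the propagation speed of the wave equation, which for a general elliptic $L$ depends on the lower bound $\sigma_2(L)(x,\xi)\geq c^2|\xi|^2$ of the principal symbol (so $C_0=1$ only when $L$ is the Laplace--Beltrami operator); this does not affect the validity of the argument.
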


\begin{proof}

First, let us formulate  the finite speed of propagation property for the wave equation (we closely follow  Theorem 4.5 (iii) in Ch. IV of \cite{Tay}).

Suppose that $L_{{\bf R}^{s}}$ is a second-order differential operator on an open set ${\bf R}^{s}$ in ${\bf R}^{s}$, that $L_1$ is elliptic, and
in fact that, for some $c > 0$,  its
principal symbol $\sigma_2(L_{{\bf R}^{s}})(x,\xi) \geq c^2|\xi|^2$, for all $(x,\xi) \in {\bf R}^{s} \times {\bf R}^{s}$.  
Suppose that $U \subseteq {\bf R}^{s}$ is open, and that $\overline{U} \subseteq {\bf R}^{s}$.  
Then if $\ supp \ h,\ g \subseteq Q \subseteq U$, where $Q$ is compact, then any solution $u$ of
\begin{align}
\left(\frac{\partial^2}{\partial t^2} + L_{{\bf R}^{s}}\right)\phi = 0 \\
\phi(0,x) = h(x)\\
\phi_t(0,x) = g(x)
\end{align}
on $U$ satisfies $\ supp \ \phi(t,\cdot) \subseteq \{x: $ dist $(x, Q) \leq |t|/c\}$.\\
\ \\

It is an easy consequence of this that a similar result holds on manifolds (see explanations in \cite{gp1} and \cite{gp3}). 
Let   $L$ be a smooth elliptic second-order non-negative operator on a manifold ${\bf M}$ and consider  the problem
\begin{equation}\label{syst1}
\left(\frac{\partial^2}{\partial t^2} + L\right)\phi = 0 \\
\end{equation}
\begin{equation}
\phi(0,x) = h(x)\\
\end{equation}
\begin{equation}\label{syst3}
\phi_t(0,x) = 0
\end{equation}
on ${\bf M}$.   It is easy to verify that if  $u_l$ form an orthonormal basis of
eigenfunctions of $L$, with corresponding eigenvalues $\lambda_l$ and
$$
h(x) = \sum_l a_l  u_l(x),\>\>\>a_{l}=\int_{\bf M}h(y)u_{l}(y)dy,
$$
 then
the solution to (\ref{syst1})-(\ref{syst3})  is

\begin{equation}\label{cos}
\phi(x,t)=\sum_{l}\left[a_l \cos \left(t\sqrt{L} \right) u_l\right](x)=\left[ \cos\left(t\sqrt{L}\right)h\right](x),
\end{equation}
or
\[ \phi(x,t) = \sum_{l}a_l \cos \left(t\sqrt{\lambda_l} \right) u_l(x). 
\]
To prove Theorem  it suffices to note that for some $c$
\begin{equation}
\label{wavetrck}
\left[\psi(t{\sqrt L})h\right](x) = c\int_{-1}^{1} \widehat{\psi}(s) \left[\cos\left(s t{\sqrt L}\right)h\right](x)ds
\end{equation}
for any $h \in C^{\infty}({\bf M})$.  This formula follows from the eigenfunction expansion
of $h$ and the Fourier inversion formula. Indeed, since $\widehat{\psi}$ is even and $supp \ \widehat{\psi} \subset (-1, 1)$ we have
$$
\int_{-1}^{1} \widehat{\psi}(s) \left[\cos(s t{\sqrt L})h \right](x)ds=\int_{-1}^{1} \widehat{\psi}(s) \left[\cos(s t{\sqrt L})h + i\ \sin(s t{\sqrt L})h \right](x)ds=
$$
$$
\int_{-\infty}^{\infty}\widehat{\psi}(s)\int_{{\bf M}}\sum_{l}e^{ist\sqrt{\lambda_{l}}}u_{l}(x)u_{l}(y)h(y)dy ds=
$$
$$
\int_{{\bf M}}\sum_{l}\left(\int_{-\infty}^{\infty}\widehat{\psi}(s)e^{ist\sqrt{\lambda_{l}}}ds\right)u_{l}(x)u_{l}(y)h(y)dy=
$$
$$
\int_{{\bf M}}\sum_{l}\psi(t\sqrt{\lambda_{l}})u_{l}(x)u_{l}(y)h(y)dy=\left[\psi(t{\sqrt L})h\right](x).
$$
We also note
\begin{equation}\label{formula}
\left[\psi(t{\sqrt L})h\right](x)=\int_{{\bf M}}\sum_{l}\psi(t\sqrt{\lambda_{l}})u_{l}(x)u_{l}(y)h(y)dy=
$$
$$
\int_{{\bf M}}\sum_{l}    F\left(t^{2}\lambda_{l}\right)    u_{l}(x)u_{l}(y)h(y)dy=\int_{\bf M}K_{t}^{F}(x,y)h(y)dy=F\left(t^{2}L\right)h(x),
\end{equation}
where
$$
\sum_{l}F\left(t^{2}\lambda_{l}\right)u_{l}(x)u_{l}(y)=K_{t}^{F}(x,y).
$$
Let us summarize. Since according to (\ref{cos}) the function $\phi(x,t)= \cos\left(t\sqrt{L}\right)h(x)$ is the solution to (\ref{syst1})-(\ref{syst3}) the finite speed of propagation principle implies that if $h$ has support in a  set $Q\subset {\bf M}$ then 
for every $t>0$ the function $\cos\left(t\sqrt{L}\right)h(x)$ has  support in the set $\{x: $ dist $(x, Q) \leq  C|t|\}$ where $C$ is independent on $Q$.

Consider a function $h\in C^{\infty}({\bf M})$ which is supported in a ball $B_{\epsilon}(y)$ whose center is an $y\in {\bf M}$ and radius is a small $\varepsilon>0$. By (\ref{wavetrck}) and (\ref{formula}) the function $\psi\left(t\sqrt{L}\right)h(x)=F\left(t^{2}L\right)h(x)$ and the kernel $K_{t}^{F}(x,y)$ (as a function in $x$) both have support in the same set 
$$
\{x:  dist  \left(x, B_{\varepsilon}(y)\right) \leq  C|t|\}.
$$
 Since $dist (x, y)=dist \left(x, B_{\varepsilon}(y)\right)+\varepsilon$ we obtain that for any $\varepsilon >0$ the support of $K_{t}^{F}(x,y)$ is in the set 
 $$
 \{x:  dist  \left(x, y\right) \leq  C|t|+\varepsilon\}.
 $$
  Theorem is proven.

\end{proof}

\section{Discretization and reduction to finite-dimensional spaces}

\begin{lem}
\label{balls}
Let ${\bf M}$  be a compact Riemannian manifold. For each positive integer $N$ with $2N^{-1/s} < diam \ {\bf M}$, there exists a collection of disjoint balls
${\mathcal A}^N = \left\{B\left(x_i^N, N^{-1/s}\right)\right\}$, such that the balls with the same centers and 3 times the radii cover ${\bf M}$,
and such that $P_N := \#{\mathcal A}^N \asymp N$.  
\end{lem}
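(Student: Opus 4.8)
The plan is to build $\mathcal{A}^N$ from a \emph{maximal separated set} of centers and then read off the disjointness, the covering property, and the cardinality estimate from the uniform two‑sided volume bound for small geodesic balls. Write $r_N=N^{-1/s}$. First I would choose, by a greedy construction that terminates because ${\bf M}$ is compact (equivalently, by Zorn's lemma together with the finiteness of separated subsets of a compact space), a \emph{maximal} subset $\{x_i^N\}\subset{\bf M}$ subject to $d(x_i^N,x_j^N)\ge 2r_N$ for all $i\ne j$. Since the centers are $2r_N$‑separated, the open balls $B(x_i^N,r_N)$ are pairwise disjoint, which is the first claimed property. For the covering property, maximality forces every $x\in{\bf M}$ to satisfy $d(x,x_i^N)<2r_N$ for some $i$ --- otherwise $x$ could be adjoined to the set without destroying $2r_N$‑separation --- so the balls $B(x_i^N,2r_N)$, and \emph{a fortiori} the balls $B(x_i^N,3r_N)$, cover ${\bf M}$.

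Next I would pin down $P_N=\#\{x_i^N\}$ using the uniform local volume comparison already invoked in the proof of (\ref{intest}): there are constants $c_1,c_2>0$ and $\delta>0$ such that $c_1 r^s\le |B(x,r)|\le c_2 r^s$ for all $x\in{\bf M}$ and all $0<r\le\delta$. For $N$ large enough that $3r_N\le\delta$, the covering property gives
$$
|{\bf M}|\le\sum_i |B(x_i^N,3r_N)|\le P_N\, c_2 (3r_N)^s = c_2\, 3^s\,\frac{P_N}{N},
$$
hence $P_N\ge |{\bf M}|\,(c_2 3^s)^{-1}N$, i.e. $P_N\gg N$. Conversely, since the balls $B(x_i^N,r_N)$ are pairwise disjoint subsets of ${\bf M}$,
$$
P_N\, c_1 r_N^s\le\sum_i |B(x_i^N,r_N)|\le |{\bf M}|,
$$
so $P_N\le |{\bf M}|\,c_1^{-1}N$, i.e. $P_N\ll N$; combining the two estimates yields $P_N\asymp N$.

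The hypothesis $2N^{-1/s}<diam\ {\bf M}$ is what keeps $r_N$ in the regime where this comparison is meaningful; for the finitely many admissible values of $N$ for which $3r_N$ still exceeds $\delta$ one uses instead the coarse bounds $c_3\delta^s\le |B(x,r)|\le |{\bf M}|\le c_4 r^s$ recorded in the same place, or simply observes that $P_N/N$ is automatically bounded above and below by positive constants over any finite set of indices. I do not expect a genuine obstacle here: the only substantive ingredient is the uniform local Ahlfors‑type regularity of $({\bf M},g)$ --- exactly the volume estimate quoted above --- and everything else is the standard maximal‑net packing argument plus bookkeeping of constants.
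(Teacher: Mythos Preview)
Your argument is correct and essentially identical to the paper's: both take a maximal collection of disjoint balls of radius $r_N=N^{-1/s}$ (equivalently, a maximal $2r_N$-separated set of centers), derive the covering property from maximality, and then bound $P_N$ from both sides via the uniform volume comparison $|B(x,r)|\asymp r^s$. Your write-up is more detailed---you make explicit the handling of the finitely many $N$ for which $3r_N$ may exceed the threshold $\delta$---but the method is the same maximal-net packing argument.
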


\begin{proof}
  We need only let ${\mathcal A}^N$ be a maximal disjoint collection of balls of radius $N^{-1/s}$.  Then surely
the balls with the same centers and $3$ times the radii cover ${\bf M}$.  Thus by disjointness
\[ \mu({\bf M}) \geq \sum_{i=1}^{P_N}  \mu\left(B\left(x_i^N, N^{-1/s}\right)\right) \gg \sum_{i=1}^{P_N} 1/N = P_N/N, \]
while by the covering property
\[ P_N/(3^sN) \gg \sum_{i=1}^{P_N} \mu\left(B\left(x_i^N, 3N^{-1/s}\right)\right) \geq \mu({\bf M}) \]
so that $P_N \asymp N$ as claimed.

\end{proof}

Now we formulate and sketch the proof of the following Lemma \ref{disfns}.  See \cite{gp3} for more details.

In what follows we consider collections of balls ${\mathcal A}^N$ as in  Lemma \ref{balls}.

\begin{lem}
\label{disfns}
Let ${\bf M}$  be a compact Riemannian manifold. Then there are smooth functions $\varphi_i^N$ ($2N^{-1/s} < diam \ {\bf M}$,
$1 \leq i \leq P_N$), as follows:
\begin{enumerate}

\item  supp $\varphi_i^N \subseteq B_i^N := B(x_i^N, N^{-1/s})$;

\item for $1 \leq p \leq \infty$, $\|\varphi_i^N\|_p \asymp N^{-1/p}$, with constants independent of $i$ or $N$.

\end{enumerate}

\end{lem}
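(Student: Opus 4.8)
The plan is to construct the functions $\varphi_i^N$ explicitly as rescaled, transported copies of a single fixed bump function, and then to verify the two claimed properties by a change of variables in normal coordinates. First I would fix a smooth, compactly supported function $\eta$ on $\mathbf{R}^s$ with $\mathrm{supp}\,\eta \subseteq B(0,1/2)$ (in the Euclidean ball), $\eta \geq 0$, and $\eta \equiv 1$ on $B(0,1/4)$; in particular $\|\eta\|_{L_p(\mathbf{R}^s)}$ is a fixed positive finite constant for every $1 \leq p \leq \infty$. For each $i$ and $N$, let $\exp_{x_i^N}\colon T_{x_i^N}\mathbf{M} \to \mathbf{M}$ be the Riemannian exponential map, which is a diffeomorphism from the Euclidean ball of radius equal to (a lower bound on) the injectivity radius onto its image; since $\mathbf{M}$ is compact, the injectivity radius $\rho_0 > 0$ is bounded below uniformly, so once $N^{-1/s} < \rho_0$ the map $\exp_{x_i^N}$ is a diffeomorphism on the ball $B(0, N^{-1/s}) \subset T_{x_i^N}\mathbf{M} \cong \mathbf{R}^s$. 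I would then set
\begin{equation}
\varphi_i^N(x) = \eta\!\left( N^{1/s}\,\exp_{x_i^N}^{-1}(x) \right)
\end{equation}
for $x$ in the image of that ball, and $\varphi_i^N \equiv 0$ elsewhere; smoothness is clear, and the support lies in $\exp_{x_i^N}(B(0, \tfrac12 N^{-1/s})) \subseteq B_i^N$, which gives property (1).

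For property (2) I would pass to geodesic normal coordinates centered at $x_i^N$. In these coordinates the Riemannian volume element is $dx = \theta_i^N(u)\,du$ where $du$ is Lebesgue measure and $\theta_i^N$ is a smooth positive function with $\theta_i^N(0) = 1$. The key uniformity point is that, because $\mathbf{M}$ is compact with bounded geometry, the functions $\theta_i^N$ are bounded above and below by fixed positive constants on the balls $B(0, \rho_0)$, uniformly in $i$ and $N$ (and in fact $\theta_i^N \to 1$ as the radius shrinks). Then for $1 \leq p < \infty$, substituting $u = N^{-1/s} v$,
\begin{equation}
\|\varphi_i^N\|_p^p = \int_{B(0,\,N^{-1/s})} \left| \eta(N^{1/s} u) \right|^p \theta_i^N(u)\,du = N^{-1}\int_{B(0,\,1)} |\eta(v)|^p\,\theta_i^N(N^{-1/s}v)\,dv,
\end{equation}
and since the last integral is $\asymp \|\eta\|_p^p$ with constants independent of $i$ and $N$, we get $\|\varphi_i^N\|_p \asymp N^{-1/p}$. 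For $p = \infty$ we simply have $\|\varphi_i^N\|_\infty = \|\eta\|_\infty$, which matches $N^{-1/\infty} = N^0 = 1$.

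The main obstacle, and the only place genuine care is needed, is establishing the uniformity of the constants in $i$ and $N$ — i.e. that the volume distortion $\theta_i^N$ and the domain of injectivity of $\exp_{x_i^N}$ can be controlled independently of the base point. This is exactly where compactness of $\mathbf{M}$ enters: the injectivity radius is a positive continuous function on a compact manifold hence bounded below, and the metric coefficients in normal coordinates, together with all their derivatives, are uniformly bounded on balls of a fixed radius (bounded geometry). Once this uniformity is in hand the rest is the routine change of variables above. I would remark that this is the standard construction of a "smooth partition-of-unity-type" family adapted to a net, and refer to \cite{gp3} for the details of the geometric estimates.
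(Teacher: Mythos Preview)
Your construction is correct and proves the lemma as stated, but it is genuinely different from the paper's argument. The paper does \emph{not} build $\varphi_i^N$ from transported bump functions in normal coordinates. Instead it defines them spectrally: one chooses an even $h(\xi)=F(\xi^2)=\xi^{2Q}F_0(\xi^2)\in\mathcal{S}_J(\mathbf{R})$ with $\widehat{h}$ supported in $(-1,1)$, invokes finite propagation speed (Theorem~\ref{wavprop}) to get that the kernel $K_t^F(x,y)$ of $F(t^2L)$ vanishes for $d(x,y)>C_0t$, and then sets, with $t=N^{-1/s}/2C_0$,
\[
\varphi_i^N(x)=\frac{1}{N}\,K_t^F(x_i^N,x).
\]
Property~(1) is then the support statement just quoted, and property~(2) follows from the two-sided kernel estimate of Theorem~\ref{lowerestimate}.

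Your route is more elementary---it uses only compactness and the uniform control of the volume density in normal coordinates, avoiding the kernel machinery entirely. The trade-off is that the paper's spectral construction is chosen precisely so that the next step, Lemma~\ref{3}, becomes immediate: since $\varphi_i^N$ is itself a kernel $N^{-1}K_t^F(x_i^N,\cdot)$, one has
\[
L^{r/2}\varphi_i^N = CN^{\frac{r}{s}-1}K_t^G(x_i^N,\cdot),\qquad G(\lambda)=\lambda^{r/2}F(\lambda),
\]
and the required $L_p$ bounds on $\sum a_i L^{r/2}\varphi_i^N$ again come from Lemma~\ref{kersize} and Lemma~\ref{Lalphok}. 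With your coordinate bump functions, $L^{r/2}\varphi_i^N$ has no such clean description---for non-integer $r$ the operator $L^{r/2}$ is nonlocal---so you would need a separate (pseudodifferential) argument to recover Lemma~\ref{3}. Incidentally, the reference you suggest for the geometric estimates, \cite{gp3}, uses the same spectral construction as here, not the normal-coordinate one.
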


\begin{proof}  For a sufficiently large $J\in \mathbb{N}$  let $h_0(\xi) = F_0(\xi^2)$ be an even element of ${\mathcal S}_{J}(\mathrm{R})$ with supp\ $\hat{h}_0 \subseteq (-1,1)$. 
For a postitive integer $Q$ yet to be chosen, let $F(\lambda) =  \lambda^Q F_0(\lambda)$, and set \begin{equation}\label{Q}
h(\xi) = F(\xi^2) = \xi^{2Q}F_0(\xi^2),
\end{equation}
so that $\hat{h} = c\partial^{2Q}\hat{h_0}$ still has support contained in $(-1,1)$.
Thus, by Theorem \ref{wavprop},
there is a $C_0 > 0$ such that
for $t > 0$, the kernel $K_t^{F}(x,y)$ of $h(t\sqrt{\mathcal L}) = F(t^2{\mathcal L})$ has the property that
$K_t^{F}(x,y) = 0$ whenever $d(x,y) > C_0t$.   Thus if $t = N^{-1/s}/2C_0$, 
\begin{equation}\label{phi}
\varphi_i^N(x) := \frac{1}{N}K_t^{F}(x_i^N,x) 
\end{equation}
satisfies (1).  By Theorem \ref{lowerestimate}, $\|\varphi_i^N\|_p \asymp N^{-1}(N^{-1/s})^{-s/p'} = N^{-1/p}$, so
(2) holds. Lemma is proven.
\end{proof}

Let $\varphi_{i}^{N}$ be the same as above. We  consider their span 
\begin{equation}\label{M}
\mathcal{H}^{N}_{p}=\left\{\sum_{i=1}^{P_N}a_i\varphi_i^N  :  a=(a_1,... ,a_{P_N}) \in \mathbf{R}^{P_N}\right\}
\end{equation}
as a finite-dimensional Banach space $\mathcal{H}^{N}_{p}$ with the norm 
\begin{equation}\label{H}
\left\|\sum_{i=1}^{P_N}a_i\varphi_i^N\right\|_{\mathcal{H}^{N}_{p}}=\left\|\sum_{i=1}^{P_N}a_i\varphi_i^N\right\|_{L_{p}({\bf M})}\asymp
CN^{-1/p}\|a\|_{p},
\end{equation}
where $C$ is independent on $N$. Clearly, for any $r>0$ the operator $L^{r/2}$ maps $\mathcal{H}^{N}_{p}$ onto the span 
$$
\mathcal{M}^{N}_{p}=\left\{\sum_{i=1}^{P_N}a_iL^{r/2}\varphi_i^N  :  a=(a_1,... ,a_{P_N}) \in \mathbf{R}^{P_N}\right\},
$$
which we will consider with the norm
$$
\left\|\sum_{i=1}^{P_N}a_iL^{r/2}\varphi_i^N\right\|_{L_{p}({\bf M})},
$$
and will denote as $\mathcal{M}^{N}_{p}\subset L_{p}({\bf M})$.
Our next goal is to estimate norm of $L^{r/2}$ as an operator from the Banach space $\mathcal{H}^{N}_{p}$ onto Banach space $\mathcal{M}^{N}_{p}$.
\begin{lem}\label{3}

If $\varphi_{i}^{N}$ are the same as in Lemma \ref{disfns}  then
 for $1 \leq p \leq \infty$, and $r > 0$, 
\begin{equation}\label{inequality}
\left\|\sum_{i=1}^{P_N}a_iL^{r/2}\varphi_i^N\right\|_{L_{p}({\bf M})} \leq 
CN^{\frac{r}{s}-\frac{1}{p}}\|a\|_p,
\end{equation}

with $C$ independent of $a = (a_1,... ,a_{P_N}) \in \mathbf{R}^{P_N}$, $p$ or $N$.

\end{lem}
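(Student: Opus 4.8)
The strategy is to reduce the estimate on the synthesis operator $a\mapsto\sum_i a_i L^{r/2}\varphi_i^N$ to a kernel estimate, exactly as was done for $\varphi_i^N$ itself in Lemma~\ref{disfns}. Recall from \eqref{phi} that $\varphi_i^N(x)=\tfrac1N K_t^F(x_i^N,x)$ with $t=N^{-1/s}/2C_0$ and $F(\lambda)=\lambda^Q F_0(\lambda)$. Since $L^{r/2}$ acts on the $y$-variable eigenfunction expansion, one has
\begin{equation}\label{eq:GF}
L^{r/2}\varphi_i^N(x)=\frac1N\sum_l \lambda_l^{r/2}F(t^2\lambda_l)u_l(x_i^N)u_l(x)=\frac{1}{N\,t^{r}}\,K_t^{G}(x_i^N,x),
\end{equation}
where $G(\lambda)=\lambda^{r/2}F(\lambda)=\lambda^{r/2+Q}F_0(\lambda)$. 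The point is that $G\in\mathcal S_J(\mathbf R^+)$ for $J$ large (multiplying a Schwartz-type symbol by a fixed power $\lambda^{r/2}$ keeps it in the class, after possibly raising $J$), so Lemmas~\ref{kersize} and~\ref{Lalphok} apply to $K_t^G$ verbatim; moreover $\widehat{G(\cdot^2)}$ still has support in $(-1,1)$ so Theorem~\ref{wavprop} still gives $K_t^G(x,y)=0$ for $d(x,y)>C_0t$.

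**Key steps, in order.** First, I would record \eqref{eq:GF} and verify that $G\in\mathcal S_J(\mathbf R^+)$ and that $K_t^G$ inherits the finite-propagation support property, so that $\mathrm{supp}\,L^{r/2}\varphi_i^N\subseteq B(x_i^N,2C_0 t)\subseteq B(x_i^N, N^{-1/s})=B_i^N$ once $t=N^{-1/s}/2C_0$. Second, from Lemma~\ref{Lalphok} applied to $K_t^G$ (in the $x$-variable, inequality \eqref{kint4}) together with the $L_\infty$ bound of Lemma~\ref{kersize}, I get
$$
\|L^{r/2}\varphi_i^N\|_{L_p({\bf M})}=\frac{1}{N t^r}\|K_t^G(x_i^N,\cdot)\|_p\leq \frac{C}{N t^r}\,t^{-s/p'}=\frac{C}{N}\,t^{-r-s+s/p}\asymp C\,N^{-1}\,N^{(r+s)/s - 1/p}= C\,N^{r/s-1/p},
$$
using $t\asymp N^{-1/s}$ and $P_N\asymp N$. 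Third, to pass from a single function to the linear combination, I use the bounded-overlap property of the balls $\{B_i^N\}$: each of the enlarged balls $\{3B_i^N\}$ (hence each $B_i^N$) meets only a bounded number of the others, with a bound depending only on ${\bf M}$ (a standard packing argument, as in Lemma~\ref{balls}). Since the supports of the $L^{r/2}\varphi_i^N$ are finitely overlapping, for $1\le p<\infty$ one has $\big\|\sum_i a_i L^{r/2}\varphi_i^N\big\|_p^p \le C\sum_i |a_i|^p\,\|L^{r/2}\varphi_i^N\|_p^p$ by the discrete finite-overlap inequality, and the $p=\infty$ case is immediate from pointwise finite overlap; combining with the single-function bound gives the claimed $C N^{r/s-1/p}\|a\|_p$.

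**Main obstacle.** The genuinely delicate point is justifying that $G(\lambda)=\lambda^{r/2}F(\lambda)$ lies in $\mathcal S_J(\mathbf R^+)$ when $r/2$ is not an integer, and — more importantly — that $\psi_G(\xi):=G(\xi^2)=\xi^{r}F_0(\xi^2)\xi^{2Q}$ is smooth enough (as an even function on $\mathbf R$) and compactly-Fourier-supported so that both the kernel size estimate (Lemma~\ref{kersize}) and the finite speed of propagation (Theorem~\ref{wavprop}) are legitimately applicable. This is precisely why the integer $Q$ was left free in \eqref{Q}: by choosing $Q$ large enough one absorbs the loss of smoothness at $\xi=0$ coming from the non-integer factor $\xi^r$, so that $\psi_G\in C^{J}$ with $\widehat{\psi_G}=c\,\partial^{2Q}\widehat{(\xi^r F_0(\xi^2)h_0)}$ still supported in $(-1,1)$. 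I would make this choice of $Q=Q(r,J,s)$ explicit and check the $\mathcal S_J$-norm bound; everything downstream is then a routine application of the earlier lemmas together with the packing/finite-overlap argument.
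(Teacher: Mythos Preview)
Your kernel identity $L^{r/2}\varphi_i^N(x)=N^{-1}t^{-r}K_t^{G}(x_i^N,x)$ with $G(\lambda)=\lambda^{r/2}F(\lambda)$ and the single-function bound $\|L^{r/2}\varphi_i^N\|_p\le CN^{r/s-1/p}$ are correct, and the paper uses exactly this $G$. The gap is in your third step. You assert that $\psi_G(\xi)=|\xi|^{r+2Q}F_0(\xi^2)$ still has $\widehat{\psi_G}$ supported in $(-1,1)$, so that Theorem~\ref{wavprop} forces $\mathrm{supp}\,L^{r/2}\varphi_i^N\subseteq B_i^N$. This fails unless $r$ is an even integer: on the Fourier side, multiplication by $|\xi|^{r}$ is convolution with the tempered distribution $\mathcal F(|\cdot|^{r})$, which is homogeneous of degree $-1-r$ and supported on all of $\mathbf R$. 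Enlarging $Q$ only differentiates $\widehat{\psi_G}$ further; it cannot shrink its support. Hence for general $r>0$ the functions $L^{r/2}\varphi_i^N$ are \emph{not} supported in the disjoint balls $B_i^N$, and your finite-overlap inequality is unavailable. (Choosing $Q$ large does ensure $G\in\mathcal S_{J_0}(\mathbf R^+)$, so Lemmas~\ref{kersize} and~\ref{Lalphok} apply to $K_t^G$; it is only the finite-propagation part that breaks.)

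The paper sidesteps this by never invoking finite propagation for $K_t^G$. It proves the endpoints $p=1$ and $p=\infty$ directly and interpolates by Riesz--Thorin. For $p=1$ the triangle inequality together with $\|L^{r/2}\varphi_i^N\|_1\le CN^{r/s-1}$ already gives the bound. For $p=\infty$ one uses only the pointwise decay of Lemma~\ref{kersize} for $K_t^G$, writes $t^{-s}\asymp N^2\mu(B_i^N)$, and then, since the balls $B_i^N$ are disjoint and $(1+d(y,x)/t)\le C(1+d(x_i^N,x)/t)$ for $y\in B_i^N$, dominates $\sum_i \mu(B_i^N)(1+d(x_i^N,x)/t)^{-(s+1)}$ by the integral in \eqref{intest}. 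This almost-orthogonality estimate replaces your disjoint-support step and requires no compact kernel support.
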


\begin{proof}
 By the Riesz-Thorin interpolation
theorem, we need only to verify the estimates  for $p = 1$ and $p=\infty$.  For  $t = N^{-1/s}/2C_0$ and $\varphi_{i}^{N}$ defined in (\ref{phi}) we have
\begin{equation}
\label{kgt}
L^{r/2}\varphi_i^N = N^{-1}t^{-r}\sum_l (t^2\lambda_l)^{r/2} F(t^2\lambda_l) u_l(x_i^N)u_l(x)
= CN^{\frac{r}{s}-1}K^G_t(x_i^N,x),
\end{equation}
where $G(\lambda) = \lambda^{r/2}F(\lambda)$ and $F$ is defined in (\ref{Q}).  Clearly, for a fixed $r>0$ function $G$ belongs to a certain $\mathcal{S}_{J_{0}}({\bf R}^{+})$ for some $J_{0}\in \mathbb{N}$ if $Q$ in (\ref{Q}) is sufficiently large. Note that $C$ in (\ref{kgt}) is independent of $N, i$ or $t$.  Thus, by (\ref{kgt}) and Lemma \ref{Lalphok},
for $p=1$ we have $\|L^{r/2}\varphi_i^N\|_1 \leq CN^{\frac{r}{s}-1}$, with $C$ independent of $i,N$.  
It proves  (\ref{inequality}) for $p=1$.  As for $p = \infty$, we again set $t = N^{-1/s}/2C_0$.
By Lemma \ref{kersize} and (\ref{phi}) , we have that for any $x$, 
\begin{equation}
\left|\sum_{i=1}^{P_N}a_i L^{r/2}\varphi_i^N(x)\right|  \leq  
CN^{\frac{r}{s}-1} \|a\|_{\infty}\sum_{i=1}^{P_N}\frac{t^{-s}}{(1+d(x_i^N,x)/t)^{s+1}}.
\end{equation}
Since $t^{-s}=\left(2C_{0}\right)^{s}N\asymp \mu\left(B_{i}^{N}\right)N^{2}$, we obtain
\begin{equation}
 \left|\sum_{i=1}^{P_N}a_i L^{r/2}\varphi_i^N(x)\right|\leq CN^{\frac{r}{s}+1} \|a\|_{\infty}\sum_{i=1}^{P_N}\frac{\mu(B_i^N)}{(1+d(x_i^N,x)/t)^{s+1}} .
 \end{equation}
 The triangle inequality shows that for 
all $x \in {\bf M}$, all $t > 0$, all $i$ and $N$, and all $y \in B_i^N$, one has $(1+d(y,x)/t) \leq C(1+d(x_i^N,x)/t)$ with $C$ independent of $x,y,t,i,N$. Combining this with  (\ref{intest}) we finally obtain
 \begin{equation}
  \left|\sum_{i=1}^{P_N}a_i L^{r/2}\varphi_i^N(x)\right|\leq CN^{\frac{r}{s}+1} \|a\|_{\infty}\int_{\bf M}\frac{dy}{(1+d(y,x)/t)^{s+1}}\leq  CN^{\frac{r}{s}} \|a\|_{\infty}.
\end{equation}
Lemma \ref{3} is proved.

\end{proof}

The next step is to reduce our main problem to a finite-dimensional situation.

Let us remained that we are using the following  notations.  $S_{n}$ will stay for either Kolmogorov $n$-width $d_{n}$ or linear $n$-width $\delta_{n}$;
 the notation $s_{n}$ will be used for either $d_{n}$ or Gelfand $n$-width $d^{n};
\>\> S^{n}$ will be used for either $d_{n}, \>\> d^{n},$ or $\delta_{n}$.

Below we will need  the following relations (see \cite{LGM}, pp. 400-403,):
\begin{equation}
\label{pupqdn}
S^n(H_{1}, Y) \leq S^n(H, Y),
\end{equation}
if $H_{1}\subset H$, and 
\begin{equation}
\label{pupqdn2}
d^{n}(H, Y)=d^{n}(H, Y_{1}), \>\>\>
S_n(H, Y) \leq S_n(H, Y_{1}), \>\>H\subset Y_{1}\subset Y, 
\end{equation}
where  $Y_{1}$ is  a subspace of $ Y$.
  Moreover, the following inequality holds
\begin{equation}
\label{linmax}
\delta_n(H,Y) \geq \max(d_n(H,Y),d^n(H,Y)).
\end{equation}

In what follows we are using notations of Lemmas \ref{balls}-\ref{3}.
\begin{lem}
\label{lowbd1}
For $1 \leq p,q \leq \infty$, if  $s_n = d_n$ or $d^n$, then
\begin{equation}
\label{lowbd1way}
s_n\left(B^r_p({\bf M}), L_q({\bf M})\right) \geq CN^{-\frac{r}{s}+\frac{1}{p}-\frac{1}{q}}s_n(b_p^{P_N},\ell_q^{P_N}),
\end{equation}
for any sufficiently large $n,N$, with $C$ independent of $n,N$.
\end{lem}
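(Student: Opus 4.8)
The plan is to show that the restriction of the width problem on $L_q(\mathbf{M})$ to the finite-dimensional subspace $\mathcal H^N_q$ (the span of the bump functions $\varphi_i^N$) is, up to the stated power of $N$, equivalent to the discrete width problem $s_n(b_p^{P_N},\ell_q^{P_N})$ of the cube in $\ell_q^{P_N}$. The mechanism is: (i) the coordinate map $a\mapsto \sum_i a_i\varphi_i^N$ is an almost-isometry from $N^{-1/q}\ell_q^{P_N}$ onto $\mathcal H^N_q\subset L_q(\mathbf M)$ by (\ref{H}); and (ii) the Sobolev-ball constraint on such a function translates, via Lemma~\ref{3}, into a constraint of the form $\|a\|_p\le CN^{\frac1p-\frac rs}$, so that a suitably scaled multiple of the $\ell_p$-ball $b_p^{P_N}$ embeds (after applying the coordinate map) into $B^r_p(\mathbf M)$.

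First I would make the embedding of the discrete ball precise. Take $a\in b_p^{P_N}$, i.e. $\|a\|_p\le 1$, and set $f_a=\sum_{i=1}^{P_N}a_i\varphi_i^N$. By (\ref{H}), $\|f_a\|_{L_p}\le CN^{-1/p}\|a\|_p\le CN^{-1/p}$, and by Lemma~\ref{3}, $\|L^{r/2}f_a\|_{L_p}\le CN^{\frac rs-\frac1p}\|a\|_p\le CN^{\frac rs-\frac1p}$. Hence $\|f_a\|_{W^r_p}\le C_0 N^{\frac rs-\frac1p}$ for a constant $C_0$ independent of $N$ (using $N\ge1$ so the dominant term is $N^{r/s-1/p}$ up to the embedding constants — more carefully, both terms are bounded by a constant multiple of $N^{r/s - 1/p}$ since $r/s>0$). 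Therefore the rescaled function $g_a = c\,N^{-\frac rs+\frac1p} f_a$ lies in $B^r_p(\mathbf M)$ for a small fixed $c>0$; equivalently, the linear map $T:a\mapsto c\,N^{-\frac rs+\frac1p}\sum_i a_i\varphi_i^N$ sends $b_p^{P_N}$ into $B^r_p(\mathbf M)$. On the other hand, by (\ref{H}) this same map, viewed as a map into $L_q(\mathbf M)$, satisfies $\|Ta\|_{L_q}\asymp c\,N^{-\frac rs+\frac1p}\cdot N^{-1/q}\|a\|_q = c'\,N^{-\frac rs+\frac1p-\frac1q}\|a\|_q$; that is, $T$ is (up to the constant $c'N^{-\frac rs+\frac1p-\frac1q}$) an isometry from $\ell_q^{P_N}$ onto its image $\mathcal H^N_q$.

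Next I would invoke the monotonicity and invariance properties (\ref{pupqdn})–(\ref{pupqdn2}). Since $T(b_p^{P_N})\subseteq B^r_p(\mathbf M)$, property (\ref{pupqdn}) gives $s_n(B^r_p(\mathbf M),L_q(\mathbf M))\ge s_n\big(T(b_p^{P_N}),L_q(\mathbf M)\big)$. For $s_n=d_n$, widths are unchanged under the isometry $T$ between $\ell_q^{P_N}$ and $\mathcal H^N_q$, and by (\ref{pupqdn2}) we may compute $d_n$ of the image set inside the subspace $\mathcal H^N_q=Y_1$ rather than all of $L_q(\mathbf M)$ (the inequality $d_n(H,Y)\le d_n(H,Y_1)$ goes the right way — $Y_1$ being smaller makes approximation harder, giving a lower bound); rescaling by the homogeneity of $d_n$ produces the factor $c'N^{-\frac rs+\frac1p-\frac1q}$ and yields $d_n(B^r_p(\mathbf M),L_q(\mathbf M))\ge C N^{-\frac rs+\frac1p-\frac1q}d_n(b_p^{P_N},\ell_q^{P_N})$. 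For $s_n=d^n$, one uses instead the first identity in (\ref{pupqdn2}), $d^n(H,Y)=d^n(H,Y_1)$, which lets us pass to the subspace $\mathcal H^N_q$ freely, and then the isometry $T$ and homogeneity again give the factor $c'N^{-\frac rs+\frac1p-\frac1q}$. Finally, since $P_N\asymp N$ by Lemma~\ref{balls}, and since for fixed $n$ the discrete widths $s_n(b_p^{P_N},\ell_q^{P_N})$ are nondecreasing in $P_N$ (more dimensions only make the cube harder to approximate), we may replace $P_N$ by $N$ in the statement up to constants; this gives (\ref{lowbd1way}) for all sufficiently large $n,N$.

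The step I expect to be the main obstacle is the careful bookkeeping in the passage to the subspace $\mathcal H^N_q$: one must use the \emph{correct} direction of the inequalities in (\ref{pupqdn})–(\ref{pupqdn2}) for each of $d_n$ and $d^n$ (the Gelfand width behaves differently from the Kolmogorov width with respect to shrinking the ambient space, which is exactly why the lemma is stated for $s_n\in\{d_n,d^n\}$ and not for $\delta_n$), and then combine this with the fact that $T$ is only an \emph{approximate} isometry — the $\asymp$ in (\ref{H}) hides two-sided constants that must be tracked through the rescaling so that the final constant $C$ is genuinely independent of $n$ and $N$. A secondary point requiring a line of justification is that the dominant contribution to $\|f_a\|_{W^r_p}$ is the $\|L^{r/2}f_a\|_{L_p}$ term of order $N^{r/s-1/p}$, which dominates the $\|f_a\|_{L_p}$ term of order $N^{-1/p}$ precisely because $r>0$; this is where (\ref{basicineq}) implicitly enters to guarantee everything lives in $L_q$, though for the lemma itself only $r>0$ is needed.
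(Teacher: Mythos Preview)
Your treatment of the Gelfand case is correct and essentially matches the paper: monotonicity in $H$ plus the identity $d^n(H,Y)=d^n(H,Y_1)$ from (\ref{pupqdn2}) lets you pass to $\mathcal H^N_q$, and then the two-sided estimate (\ref{H}) transports the problem to $\ell_q^{P_N}$ up to the stated power of $N$.

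The gap is in the Kolmogorov case. You invoke (\ref{pupqdn2}) and write that ``the inequality $d_n(H,Y)\le d_n(H,Y_1)$ goes the right way,'' but it does not. The chain you need is
\[
d_n\big(B^r_p,L_q\big)\ \ge\ d_n\big(T(b_p^{P_N}),L_q\big)\ \stackrel{?}{\ge}\ c\, d_n\big(T(b_p^{P_N}),\mathcal H^N_q\big),
\]
and it is the last step that fails: shrinking the ambient space from $L_q$ to $\mathcal H^N_q$ only \emph{increases} the Kolmogorov width (there are fewer $n$-dimensional subspaces available), so $d_n(T(b_p^{P_N}),L_q)\le d_n(T(b_p^{P_N}),\mathcal H^N_q)$ is an upper bound, useless for a lower estimate of $d_n(B^r_p,L_q)$. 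Your parenthetical ``$Y_1$ being smaller makes approximation harder'' is true but points the inequality in the wrong direction for your purpose.

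The paper repairs exactly this step by constructing an explicit bounded projection $\Pi_N:L_q\to\mathcal H^N_q$ (namely $\Pi_N h=\sum_i\big(\int h\varphi_i^N/\|\varphi_i^N\|_2^2\big)\varphi_i^N$) and checking $\|\Pi_N h\|_q\le c\|h\|_q$ using Lemma~\ref{disfns} and the disjointness of the $B_i^N$. Then for any $g\in\mathcal H^N_q$ and $h\in L_q$ one has $\|g-\Pi_N h\|_q=\|\Pi_N(g-h)\|_q\le c\|g-h\|_q$, which yields $d_n(K,L_q)\ge c^{-1}d_n(K,\mathcal H^N_q)$ for every $K\subset\mathcal H^N_q$. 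This projection argument is the missing ingredient in your proposal; without it the Kolmogorov half of (\ref{lowbd1way}) is not established. (Your final remark about replacing $P_N$ by $N$ is unnecessary: the statement already has $P_N$.)
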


\begin{proof}
 With the $\varphi_i^N$ as in Lemma \ref{disfns}, we consider the space of functions of the form
\begin{equation}
\label{hndf}
g_a = \sum_{i=1}^{P_N} a_i \varphi_i^N,
\end{equation}
for $a = (a_1,\ldots,a_{P_N}) \in {\bf R}^{P_N}$.  By Lemma \ref     {disfns}  
and the disjointness of the $B_i^N$,
\begin{equation}
\label{gaq}
\|g_a\|_{q} \asymp N^{-1/q}\|a\|_q,
\end{equation}
with constants independent of $N$ or $a$.  
By Lemma \ref{3}  for some $c > 0$, if we set $\epsilon = \epsilon_N = cN^{-\frac{r}{s}+\frac{1}{p}},$
and if $a \in \epsilon b_p^{P_N}$, then $g_a \in B^r_p$.  Thus,
\begin{equation}
\label{gndf}
\mathcal{G}^{N}_{p} := \{g_a \in \mathcal{H}^{N}_{p}: a \in \epsilon b_p^{P_N}\} \subseteq B^r_p.
\end{equation}
For the Gelfand widths, it is a consequence of the Hahn-Banach theorem,
that if $K \subseteq X \subseteq Y$, where $X$ is a subspace of the normed space $Y$, then
$d^n(K,X) = d^n(K,Y)$ for all $n$.   Thus, using (\ref{H}) we obtain
$$
d^{n}\left(B^r_p({\bf M}),L_q({\bf M})\right)  \geq d^n(\mathcal{G}^{N}_{p},L_q) = d^n(\mathcal{G}^{N}_{p},\mathcal{H}^{N}_{q}) \geq 
$$
$$
CN^{-1/q}d^n(\epsilon_N b_p^{P_N},\ell_q^{P_N})=CN^{-r/s+1/p-1/q}d^n(b_p^{P_N},\ell_q^{P_N})
$$
for some $C$ independent of $n, N$.  This proves the lemma for the 
Gelfand widths.

For the Kolmogorov widths, for the same reason, we need only show that
\begin{equation}
\label{kolgd}
d_n(B^r_p,L_q) \geq Cd_n(\mathcal{G}^{N}_{p},\mathcal{H}^{N}_{q}).
\end{equation}
with $C$ independent of $n,N$.

To this end we define the projection operator $\Pi_N: L_q \to \mathcal{H}^{N}_{q}$ by
\[\Pi_Nh = g_a,\ \ \ \ \ \ \ \ \ \ \ \ \ \mbox{ where } a_i = \frac{\int h\varphi_i^N}{\|\varphi\|^2_2}.  \]
By Lemma \ref{disfns} and H\"older's inequality, we have here that each $|a_i| \leq C\|h\chi_i^N\|_q N^{1-1/q'}$,
where $\chi_i^N$ is the characteristic function of $B_i^N$.  By (\ref{gaq}) and the disjointness of the $B_i^N$, we have that
\begin{equation}
\label{qngd}
\|\Pi_N h\|_q = \|g_a\|_q \asymp N^{-1/q}\|a\|_q \leq cN^{1-1/q-1/q'}\|h\|_q = c\|h\|_q,
\end{equation}
with $C$ independent of $n,N$.  

Accordingly, for any $g \in \mathcal{H}^{N}_{q}$ and $h \in L_q$, we have that 
$$
\|g-\Pi_Nh\|_q = \|\Pi_Ng-\Pi_Nh\|_q \leq c\|g-h\|_q.
$$
  Thus, if
$K$ is any subset of $\mathcal{H}^{N}_{q}$, $d_n(K,L_q) \geq c^{-1}d_n(K,\mathcal{H}^{N}_{q})$.  In particular
\[ d_{n}\left(B^r_p({\bf M}),L_q({\bf M})\right)  \geq d_n(\mathcal{G}^{N}_{p},L_q) \geq c^{-1}d_n(\mathcal{G}^{N}_{p},\mathcal{H}^{N}_{q}). \]
This establishes (\ref{kolgd}), and completes the proof.

\end{proof}

\section{Proof of the main result}\label{Proof}

In this section we will prove Theorem \ref{Main}.

  We will need several facts about widths. First, say $p \geq p_1$, $q \leq q_1$, and $S^n = d_n, d^n$ or 
$\delta_n$.  One then has the following two evident facts
\begin{equation}
\label{pupqdn}
S^n\left(B^r_p({\bf M}),L_q({\bf M})\right)  \leq CS^n\left(B^r_{p_{1}}({\bf M}),L_{q_{1}}({\bf M})\right) 
\end{equation}
with $C$ independent of $n$, while
\begin{equation}
\label{pdnqup}
S^n(b_p^Q,\ell_q^Q) \geq CS^n(b_{p_1}^Q,\ell_{q_1}^Q)
\end{equation}
with $C$ independent of $n, Q$.  

By Lemma \ref{balls}, we may choose $\nu > 0$ such that $P_{\nu n} \geq 2n$ for all sufficiently
large $n$.  In this proof we will always take $N = \nu n$.  We consider the various ranges of $p,q$
separately:

\begin{enumerate}
\item
$1\leq q \leq p\leq \infty$.\\
\ \\
In this case, we note that if $S^n = d_n, d^n$ or $\delta_n$, then by (\ref{pupqdn}),
\begin{equation}
\label{pqinf1}
S^n\left(B^r_p({\bf M}),L_q({\bf M})\right)  \geq CS^n\left(B^r_\infty({\bf M}),L_1({\bf M})\right) .
\end{equation}
On the other hand, if $s_n = d_n$ or $d^n$, then by (3.1) on page 410 of \cite{LGM},
$s_n(b_{\infty}^{P_N},\ell_{1}^{P_N}) = P_N - n \geq n$.  By this, (\ref{pqinf1}) and
Lemma \ref{lowbd1}, we find that
\[ s_n\left(B^r_p({\bf M}),L_q({\bf M})\right)  \gg n^{-\frac{r}{s}-1}n = n^{-\frac{r}{s}} \]
first for $s_n = d_n$ or $d^n$ and then for $\delta_n$, by (\ref{linmax}).  This
completes the proof in this case.
\item
$1 \leq p \leq q \leq 2$.\\
\ \\ 
In this case, for the Gelfand widths we just observe, by (\ref{pupqdn}), that
\begin{equation}
\label{pqp1}
d^{n}\left(B^r_p({\bf M}),L_q({\bf M})\right)  \geq Cd^{n}\left(B^r_p({\bf M}),L_p({\bf M})\right) \gg n^{-\frac{r}{s}}
\end{equation}
by case 1.  For the Kolmogorov widths we observe, by Lemma \ref{lowbd1} and (\ref{pdnqup}), that
\begin{equation}
\label{pq12}
d_{n}\left(B^r_p({\bf M}),L_q({\bf M})\right)  \gg n^{-\frac{r}{s}+\frac{1}{p}-\frac{1}{q}}d_n(b_{p}^{P_N},\ell_q^{P_N}) \gg
$$
$$
n^{-\frac{r}{s}+\frac{1}{p}-\frac{1}{q}}d_n(b_{1}^{P_N},\ell_2^{P_N}) \gg
n^{-\frac{r}{s}+\frac{1}{p}-\frac{1}{q}},
\end{equation}
since, by (3.3) of page 411 of \cite{LGM}, $d_n(b_{1}^{P_N},\ell_2^{P_N}) = \sqrt{1-n/P_N} \geq 1/\sqrt{2}$.
Finally, for the linear widths, we have by (\ref{linmax}), that
\[  \delta_{n}\left(B_{p}^{r}({\bf M}), L_{q}({\bf M})\right) \gg n^{-\frac{r}{s}+\frac{1}{p}-\frac{1}{q}}. \]
This completes the proof in this case.
\item
$2 \leq p \leq q\leq \infty$.\\
\ \\ 
In this case, for the Kolmogorov widths we just observe, by (\ref{pupqdn}), that
\begin{equation}
\label{pqp1}
d_{n}\left(B^r_p({\bf M}),L_q({\bf M})\right)  \geq Cd_{n}\left(B^r_p({\bf M}),L_p({\bf M})\right)  \gg n^{-\frac{r}{s}}
\end{equation}
by case 1.  For the Gelfand widths we observe, by Lemma \ref{lowbd1} and (\ref{pdnqup}), that
\begin{equation}
\label{pq12}
d^{n}\left(B^r_p({\bf M}),L_q({\bf M})\right)  \gg n^{-\frac{r}{s}+\frac{1}{p}-\frac{1}{q}}d^n(b_{p}^{P_N},\ell_q^{P_N}) \gg
$$
$$
n^{-\frac{r}{s}+\frac{1}{p}-\frac{1}{q}}d^n(b_{2}^{P_N},\ell_{\infty}^{P_N}) \gg
n^{-\frac{r}{s}+\frac{1}{p}-\frac{1}{q}},
\end{equation}
since, by (3.5) on page 412 of \cite{LGM}, 
$$
d^n(b_{2}^{P_N},\ell_{\infty}^{P_N}) = \sqrt{1-n/P_N} \geq 1/\sqrt{2}.
$$
Finally, for the linear widths, we have by (\ref{linmax}), that
\[  \delta_{n}\left(B_{p}^{r}({\bf M}), L_{q}({\bf M})\right) \gg n^{-\frac{r}{s}+\frac{1}{p}-\frac{1}{q}}. \]
This completes the proof in this case.
\item
$1 \leq p \leq 2 \leq q \leq \infty$.\\
\ \\
If  $1 \leq \alpha \leq \alpha_1 \leq \infty$, then by H\"older's inequality, for every $a=(a_{1},..., a_{P_{N}})$
\begin{equation}
\label{hold1}
\|a\|_{\alpha} \leq P_{N}^{\frac{1}{\alpha}-\frac{1}{\alpha_1}}\|a\|_{\alpha_1}.
\end{equation}
This implies that
\begin{equation}
\label{hold2}
b_{\alpha_1}^{P_{N}} \subseteq P_{N}^{\frac{1}{\alpha_1}-\frac{1}{\alpha}}b_{\alpha}^{P_{N}}.
\end{equation}
From Lemma \ref{lowbd1}, (\ref{pdnqup}) and (\ref{hold1}), we find that
\begin{equation}
\label{1p2qdn1}
d_{n}\left(B^r_p({\bf M}),L_q({\bf M})\right)  \gg n^{-\frac{r}{s}+\frac{1}{p}-\frac{1}{q}}d_n(b_p^{P_N},\ell_q^{P_N})
\gg 
$$
$$
n^{-\frac{r}{s}+\frac{1}{p}-\frac{1}{q}}d_n(b_1^{P_N},\ell_q^{P_N})
\gg n^{-\frac{r}{s}+\frac{1}{p}-\frac{1}{2}}d_n(b_1^{P_N},\ell_2^{P_N})
\gg n^{-\frac{r}{s}+\frac{1}{p}-\frac{1}{2}}.
\end{equation}
From Lemma \ref{lowbd1}, (\ref{pdnqup}) and (\ref{hold2}), we find that
\begin{equation}
\label{1p2qdn2}
d^{n}\left(B^r_p({\bf M}),L_q({\bf M})\right)  \gg n^{-\frac{r}{s}+\frac{1}{p}-\frac{1}{q}}d^n(b_p^{P_N},\ell_q^{P_N})
\gg 
$$
$$
n^{-\frac{r}{s}+\frac{1}{p}-\frac{1}{q}}d^n(b_p^{P_N},\ell_{\infty}^{P_N})
\gg n^{-\frac{r}{s}+\frac{1}{2}-\frac{1}{q}}d^n(b_2^{P_N},\ell_{\infty}^{P_N})
\gg n^{-\frac{r}{s}+\frac{1}{2}-\frac{1}{q}}.
\end{equation}
Finally, from (\ref{1p2qdn1}), (\ref{1p2qdn2}) and (\ref{linmax}),
\begin{equation}
\label{1p2qdn3}
\delta_{n}\left(B_{p}^{r}({\bf M}), L_{q}({\bf M})\right) \gg \max\left(n^{-\frac{r}{s}+\frac{1}{p}-\frac{1}{2}},n^{-\frac{r}{s}+\frac{1}{2}-\frac{1}{q}}\right).
\end{equation}
This completes the proof of our main Theorem \ref{Main}.

\end{enumerate}

\end{document}